\newtheorem{prethm}{{\bf  Theorem}}
\newenvironment{thm}{\begin{prethm}{\hspace{-0.5
               em}{\bf .}}}{\end{prethm}}
\newtheorem{prepro}{{\bf  Theorem}}
\newenvironment{pro}{\begin{prepro}{\hspace{-0.5
               em}{\bf .}}}{\end{prepro}}
\newtheorem{precor}{{\bf  Corollary}}
\newenvironment{cor}{\begin{precor}{\hspace{-0.5
               em}{\bf .}}}{\end{precor}}
\newtheorem{preconj}{{\bf  Conjecture}}
\newtheorem{preremark}{{\bf  Remark}}
\newenvironment{remark}{\begin{preremark}{\hspace{-0.5
               em}{\bf .}}}{\end{preremark}}
\newtheorem{prelem}{{\bf  Lemma}}
\newenvironment{lem}{\begin{prelem}{\hspace{-0.5
               em}{\bf .}}}{\end{prelem}}
\newtheorem{preproof}{{\bf  Proof.}}
\newenvironment{proof}[1]{\begin{preproof}{\rm
               #1}\hfill{$\Box$}}{\end{preproof}}
\title{\large \ {\bf On the intersection graph of ideals of $\mathbb{Z}_m$}
\thanks
{{\it Key words}: intersection graph, independent set, dominating set, chromatic index, Eulerian graph. \newline
{\indent ~~{2010{ \it Mathematics Subject Classification}: 05C15, 05C25, 05C45, 05C69.}}\newline
\newline
{\indent ~~{{\it E-mail addresses}: s\_khojasteh@liau.ac.ir (S. Khojasteh).}}}}
\author{\bf\small\sc  S. Khojasteh \\
\\ {\footnotesize {\em Department of Mathematics,  Lahijan Branch, Islamic Azad University, Lahijan, Iran}}}
\date{}
\begin{document}
\maketitle
\begin{abstract}
 Let $m>1$ be an integer, and let $I(\mathbb{Z}_m)^*$ be the set of all non-zero proper ideals of $\mathbb{Z}_m$. The intersection graph of ideals of $\mathbb{Z}_m$, denoted by $G(\mathbb{Z}_m)$, is a graph with vertices $I(\mathbb{Z}_m)^*$ and two distinct vertices $I,J\in I(\mathbb{Z}_m)^*$ are adjacent if and only if $I\cap J\neq 0$. Let $n>1$ be an integer and $\mathbb{Z}_n$ be a $\mathbb{Z}_m$-module. In this paper, we introduce
and study a kind of graph structure of $\mathbb{Z}_m$, denoted by $G_n(\mathbb{Z}_m)$. It is the undirected graph with the vertex set $I(\mathbb{Z}_m)^*$, and two distinct vertices $I$ and $J$ are adjacent if and only if $I\mathbb{Z}_n\cap J\mathbb{Z}_n\neq 0$. Clearly, $G_m(\mathbb{Z}_m)=G(\mathbb{Z}_m)$. We obtain some graph theoretical properties of $G_n(\mathbb{Z}_m)$ and we compute some of its numerical invariants, namely girth, independence number, domination number, maximum degree and chromatic index. We also determine all integer numbers $n$ and $m$ for which $G_n(\mathbb{Z}_m)$ is Eulerian.
\end{abstract}

\section{ Introduction}
 Let $R$ be a commutative ring, and $I(R)^*$ be the set of all non-zero proper ideals of $R$.
There are many papers on assigning a graph to a ring $R$, for instance see \cite{unicyclic},
\cite{conjecture}, \cite{and2} and \cite{atani}.
Also the intersection graphs of some algebraic structures such as groups, rings and modules have been studied by several authors, see \cite{akbtaval, Chakrabarty, Cs}.
 In \cite{Chakrabarty}, the intersection graph of ideals of $R$, denoted by $G(R)$, was introduced as the graph with vertices $I(R)^*$ and for distinct $I,J\in I(R)^*$,
 the vertices $I$ and $J$ are adjacent if and only if $I\cap J\neq 0$. Also in \cite{akbtaval}, the intersection
graph of submodules of an $R$-module $M$, denoted by $G(M)$, is defined to be the graph whose vertices are
the non-zero proper submodules of $M$ and two distinct vertices are
adjacent if and only if they have non-zero intersection.
Let $n,m>1$ be integers and $\mathbb{Z}_n$ be a $\mathbb{Z}_m$-module. In this paper, we associate a graph to $\mathbb{Z}_m$, in which the vertex set is being the set of all non-zero proper ideals of $\mathbb{Z}_m$, and two distinct vertices $I$ and $J$ are adjacent if and only if $I\mathbb{Z}_n\cap J\mathbb{Z}_n\neq 0$. We denote this graph by $G_n(\mathbb{Z}_m)$. Clearly, if $n=m$, then $G_n(\mathbb{Z}_m)$ is exactly the same as the intersection graph of ideals of $\mathbb{Z}_m$. This implies that $G_n(\mathbb{Z}_m)$ is a generalization of $G(\mathbb{Z}_m)$. As usual, $\mathbb{Z}_m$ denotes the integers modulo $m$.\\
\indent Now, we recall some definitions and notations on graphs. Let $G$ be a graph with the vertex set $V(G)$ and the edge set $E(G)$. Then we say the order of $G$ is $|V(G)|$ and the size of $G$ is $|E(G)|$. Suppose that $x,y\in V(G)$. If $x$ and $y$ are adjacent, then we write $x$ --- $y$. We denote by $deg(x)$ the degree of a vertex $x$ in $G$. Also, we denote the maximum degree of $G$ by $\Delta(G)$. We recall
that a \textit{path} between $x$ and $y$ is a sequence $x=v_0$ --- $v_1$ --- $\cdots$ --- $v_k=y$ of vertices of $G$ such that for every $i$ with $1 \leq i \leq k$, the vertices $v_{i-1}$ and $v_{i}$ are adjacent and $v_{i}\neq v_{j}$, where $i\neq j$. We say that $G$ is \textit{connected} if there is a path between any two distinct vertices of $G$. For vertices $x$ and $y$ of $G$, let $d(x,y)$ be the length of a shortest path from $x$ to $y$ ($d(x,x)=0$ and $d(x,y)=\infty$ if there is no path between $x$ and $y$). The \textit{diameter} of $G$, $diam(G)$, is the supremum of the set $\{d(x,y) : x \ \hbox{and} \ y \ \hbox{are vertices of} \ G\}$. The \textit{girth} of $G$, denoted by $gr(G)$, is the length of a shortest cycle in $G$ ($gr(G)=\infty$ if $G$ contains no cycles). We use $n$-cycle to denote the cycle with $n$ vertices, where $n \geq 3$. Also, we denote the complete graph on $n$ vertices by $K_n$. A \textit{null graph} is a graph containing no edges. We use $\overline{K_n}$ to denote the null graph of order $n$. The \textit{disjoint union} of two vertex-disjoint graphs $G_{1}$ and $G_{2}$, which is denoted by $G_{1}\cup G_{2}$, is a graph with $V(G_{1}\cup G_{2})=V(G_{1})\cup V(G_{2})$ and $E(G_{1}\cup G_{2})=E(G_{1})\cup E(G_{2})$. An {\it independent set} is a subset of the vertices of a graph such that no vertices are adjacent. The number of vertices in a maximum independent set of $G$ is called the {\it independence number} of $G$ and is denoted by $\alpha(G)$. A {\it dominating set} is a subset $S$ of $V(G)$ such that every vertex of $V(G)\setminus S$ is adjacent to at least one vertex in $S$. The number of vertices in a smallest dominating set denoted by $\gamma(G)$, is called the {\it domination number} of $G$. Recall that a $k$-edge coloring of $G$ is an assignment of $k$ colors $\{1, \ldots, k\}$ to the edges of $G$ such that no two adjacent edges have the same color, and the \textit{chromatic index} of $G$, $\chi'(G)$, is the smallest integer $k$ such that $G$ has a $k$-edge coloring.\\
\indent In \cite{Chakrabarty}, the authors were mainly interested in the study of
intersection graph of ideals of $\mathbb{Z}_m$. For instance, they determined the values of $m$ for which $G(\mathbb{Z}_m)$ is connected, complete, Eulerian or has a cycle. In this article, we generalize these results to $G_n(\mathbb{Z}_m)$ and also, we find some new results. In Section 2, we compute its girth, independence number, domination number and maximum degree. We also determine all integer numbers $n$ and $m$ for which $G_n(\mathbb{Z}_m)$ is a forest. In Section 3, we investigate the chromatic index of $G_n(\mathbb{Z}_m)$. In the last section, we determine all integer numbers $n$ and $m$ for which $G_n(\mathbb{Z}_m)$ is Eulerian.

\section{ Basic Properties of $G_n(\mathbb{Z}_m)$}

Let $n,m>1$ be integers and $\mathbb{Z}_n$ be a $\mathbb{Z}_m$-module. Clearly, $\mathbb{Z}_n$ is a $\mathbb{Z}_m$-module if and only if $n$ divides $m$. Throughout the paper, without loss of generality, we assume that $m=p_1^{\alpha_1}\cdots p_s^{\alpha_s}$ and $n=p_1^{\beta_1}\cdots p_s^{\beta_s}$, where $p_i$'s are distinct primes, $\alpha_i$'s are positive integers, $\beta_i$'s are non-negative integers, and $0\leq \beta_i\leq \alpha_i$ for $i=1,\ldots,s$. Let $S=\{1,\ldots,s\}$, $S'=\{i\in S \ :\,\beta _i\neq 0\}$. The cardinality of $S'$ is denoted by $s'$. Also, we denote the least common multiple of integers $a$ and $b$ by $[a,b]$. We write $a|b$ ($a\nmid b$) if $a$ divides $b$ ($a$ does not divide $b$). We begin with a simple example.

\par\noindent{\bf Example 1.}
Let $m=12$. Then we have the following graphs.
\begin{center}
%\hspace{.1cm}
    \begin{tikzpicture}
        \GraphInit[vstyle=Classic]
        \Vertex[x=1,y=0,style={black,minimum size=3pt},LabelOut=true,Lpos=270,L=$4\mathbb{Z}_{12}$]{4}
        \Vertex[x=1,y=1,style={black,minimum size=3pt},LabelOut=true,Lpos=90,L=$2\mathbb{Z}_{12}$]{2}
        \Vertex[x=2.3,y=0,style={black,minimum size=3pt},LabelOut=true,Lpos=270,L=$6\mathbb{Z}_{12}$]{6}
        \Vertex[x=2.3,y=1,style={black,minimum size=3pt},LabelOut=true,Lpos=90,L=$3\mathbb{Z}_{12}$]{3}
        \Edges(2,3)
        \Edges(6,3)
        \Edges(2,6)
        \Edges(2,4)

        %\SetVertexNoLabel
    \end{tikzpicture}
\hspace{1cm}
    \begin{tikzpicture}
        \GraphInit[vstyle=Classic]
        \Vertex[x=1,y=0,style={black,minimum size=3pt},LabelOut=true,Lpos=270,L=$4\mathbb{Z}_{12}$]{4}
        \Vertex[x=1,y=1,style={black,minimum size=3pt},LabelOut=true,Lpos=90,L=$2\mathbb{Z}_{12}$]{2}
        \Vertex[x=2.3,y=0,style={black,minimum size=3pt},LabelOut=true,Lpos=270,L=$6\mathbb{Z}_{12}$]{6}
        \Vertex[x=2.3,y=1,style={black,minimum size=3pt},LabelOut=true,Lpos=90,L=$3\mathbb{Z}_{12}$]{3}

        %\SetVertexNoLabel
    \end{tikzpicture}
\hspace{1cm}
    \begin{tikzpicture}
        \GraphInit[vstyle=Classic]
        \Vertex[x=1,y=0,style={black,minimum size=3pt},LabelOut=true,Lpos=270,L=$4\mathbb{Z}_{12}$]{4}
        \Vertex[x=1,y=1,style={black,minimum size=3pt},LabelOut=true,Lpos=90,L=$2\mathbb{Z}_{12}$]{2}
        \Vertex[x=2.3,y=0,style={black,minimum size=3pt},LabelOut=true,Lpos=270,L=$6\mathbb{Z}_{12}$]{6}
        \Vertex[x=2.3,y=1,style={black,minimum size=3pt},LabelOut=true,Lpos=90,L=$3\mathbb{Z}_{12}$]{3}
        \Edges(2,4)

        %\SetVertexNoLabel
    \end{tikzpicture}
\hspace{1cm}
    \begin{tikzpicture}
        \GraphInit[vstyle=Classic]
        \Vertex[x=1,y=0,style={black,minimum size=3pt},LabelOut=true,Lpos=270,L=$4\mathbb{Z}_{12}$]{4}
        \Vertex[x=1,y=1,style={black,minimum size=3pt},LabelOut=true,Lpos=90,L=$2\mathbb{Z}_{12}$]{2}
        \Vertex[x=2.3,y=0,style={black,minimum size=3pt},LabelOut=true,Lpos=270,L=$6\mathbb{Z}_{12}$]{6}
        \Vertex[x=2.3,y=1,style={black,minimum size=3pt},LabelOut=true,Lpos=90,L=$3\mathbb{Z}_{12}$]{3}
        \Edges(2,3)
        \Edges(6,3)
        \Edges(2,6)

        %\SetVertexNoLabel
    \end{tikzpicture}
 \end{center}
\hspace{1.4cm}
$G(\mathbb{Z}_{12})$ \hspace{2.1cm}
$G_2(\mathbb{Z}_{12})$ \hspace{1.9cm}
$G_3(\mathbb{Z}_{12})$ \hspace{2cm}
$G_4(\mathbb{Z}_{12})$

\begin{remark}
\label{zngzm}
{\rm  It is easy to see that $I(\mathbb{Z}_m)=\{d\mathbb{Z}_m : d$ divides $m \}$ and
$|I(\mathbb{Z}_m)^{*}|=\prod_{i=1}^{s}(\alpha_i+1)-2$. Let $\mathbb{Z}_n$ be a $\mathbb{Z}_m$-module. If $n|d$, then $d\mathbb{Z}_m$ is an isolated vertex of $G_n(\mathbb{Z}_m)$.
Obviously, $d_1\mathbb{Z}_m$ and $d_2\mathbb{Z}_m$ are adjacent in $G_n(\mathbb{Z}_m)$ if and only if $n\nmid [d_1,d_2]$. This implies that $G_n(\mathbb{Z}_m)$ is a subgraph of $G(\mathbb{Z}_m)$.}
\end{remark}

By \cite[Theorem 2.5]{akbtaval}, we have $gr(G(\mathbb{Z}_m))\in\{3,\infty\}$. We extend this result to $G_n(\mathbb{Z}_m)$.
\begin{thm}\label{girth}
{ Let $\mathbb{Z}_n$ be a $\mathbb{Z}_m$-module. Then $gr(G_n(\mathbb{Z}_m))\in\{3,\infty\}.$}
\end{thm}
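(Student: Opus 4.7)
The strategy is contradiction: assume $G_n(\mathbb{Z}_m)$ contains a cycle but no triangle, take a shortest cycle $C$, and manufacture a triangle. First I dispose of the prime-power case $s=1$: then $m=p_1^{\alpha_1}$ and, by Remark~\ref{zngzm}, $p_1^a\mathbb{Z}_m$ and $p_1^b\mathbb{Z}_m$ are adjacent iff $\max(a,b)<\beta_1$, so the non-isolated vertices form a clique and any cycle already contains a triangle. Hence from now on $s\geq 2$.

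Let $C$ be a shortest cycle of length $k\geq 4$, with vertices $v_1,\ldots,v_k$ in cyclic order, where $v_j=d_j\mathbb{Z}_m$ and $d_j=p_1^{a_{j,1}}\cdots p_s^{a_{j,s}}$. By Remark~\ref{zngzm}, each cycle edge $v_jv_{j+1}$ (indices mod $k$) provides an index $i_j\in S'$ with $a_{j,i_j}<\beta_{i_j}$ and $a_{j+1,i_j}<\beta_{i_j}$. The key structural input, which uses only the chord-freeness of a minimal cycle, is that $i_1\neq i_2$ and $i_1\neq i_k$. Indeed, if $i_1=i_2$ then $a_{1,i_1}<\beta_{i_1}$ and $a_{3,i_1}<\beta_{i_1}$, so $p_{i_1}^{\beta_{i_1}}\nmid[d_1,d_3]$ and $v_1v_3$ is an edge, a chord of $C$ which is impossible for $k\geq 4$; symmetrically, $i_1=i_k$ would force the chord $v_2v_k$.

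Now set $d^{*}=m/p_{i_1}^{\alpha_{i_1}}=\prod_{i\neq i_1}p_i^{\alpha_i}$. Since $s\geq 2$, we have $1<d^{*}<m$, so $d^{*}\mathbb{Z}_m\in I(\mathbb{Z}_m)^{*}$. The $p_{i_1}$-exponent of $d^{*}$ is $0<\beta_{i_1}$, so for $j=1,2$ the inequality $a_{j,i_1}<\beta_{i_1}$ yields $p_{i_1}^{\beta_{i_1}}\nmid[d^{*},d_j]$, whence $d^{*}\mathbb{Z}_m$ is adjacent to each of $v_1$ and $v_2$. For distinctness, at the prime $p_{i_k}$ (with $i_k\neq i_1$) the element $d^{*}$ has exponent $\alpha_{i_k}$ while $d_1$ has exponent $a_{1,i_k}<\beta_{i_k}\leq\alpha_{i_k}$, so $d^{*}\neq d_1$; the analogous argument with $i_2$ in place of $i_k$ gives $d^{*}\neq d_2$. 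Therefore $\{v_1,v_2,d^{*}\mathbb{Z}_m\}$ spans a triangle in $G_n(\mathbb{Z}_m)$, contradicting the no-triangle hypothesis.

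The main subtlety is extracting the right consequence of the minimality of $C$: we need exactly the inequalities $i_1\neq i_2$ and $i_1\neq i_k$, which make the auxiliary vertex $d^{*}\mathbb{Z}_m$ distinct from $v_1$ and $v_2$ while remaining adjacent to both through the common deficient prime $p_{i_1}$. The separate treatment of the prime-power case $s=1$ is needed because that is exactly where the construction $d^{*}=m/p_{i_1}^{\alpha_{i_1}}$ degenerates to $1$.
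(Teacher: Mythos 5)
Your proof is correct, and it takes a genuinely different route from the paper's. The paper argues by exhaustive case analysis on $s'$ and $s$: in each generic configuration it exhibits a concrete $3$-cycle (e.g.\ $p_1\mathbb{Z}_m$ --- $p_2\mathbb{Z}_m$ --- $p_1p_2\mathbb{Z}_m$, or $p_i\mathbb{Z}_m$ --- $p_i^2\mathbb{Z}_m$ --- $p_i^3\mathbb{Z}_m$), and in the remaining small parameter sets it identifies the graph explicitly and checks it is a forest; the payoff of that enumeration is the exact list of pairs $(n,m)$ with $gr(G_n(\mathbb{Z}_m))=\infty$, which is precisely what Corollary \ref{forest} is read off from. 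You instead run a uniform shortest-cycle contradiction: after disposing of $s=1$ (where the non-isolated vertices form a clique), you observe that consecutive edges of a chordless cycle of length $\geq 4$ must be witnessed by distinct deficient primes ($i_1\neq i_2$ and $i_1\neq i_k$, since a shared witness yields the chord $v_1v_3$ or $v_2v_k$), and then the auxiliary vertex $d^*\mathbb{Z}_m$ with $d^*=\prod_{i\neq i_1}p_i^{\alpha_i}$ is adjacent to $v_1$ and $v_2$ through $p_{i_1}$ and is forced to be distinct from them by its exponents at $p_{i_k}$ and $p_{i_2}$. I checked the details (validity of $d^*$ as a vertex for $s\geq 2$, the adjacency criterion $n\nmid[d_1,d_2]$, and the distinctness inequalities $a_{1,i_k}<\beta_{i_k}\leq\alpha_{i_k}$, $a_{2,i_2}<\beta_{i_2}\leq\alpha_{i_2}$) and they all hold. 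What your approach buys is brevity and uniformity, avoiding the enumeration; what it gives up is the classification of the $gr=\infty$ cases, so Corollary \ref{forest} would no longer be ``immediate'' and would require a separate (easy) analysis of when the graph has any cycle at all.
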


\begin{proof}
{ With no loss of generality assume that $S'=\{1,\ldots,s'\}$. Clearly, if $s'\geq 3$, then $p_1\mathbb{Z}_m$ --- $p_2\mathbb{Z}_m$ --- $p_1p_2\mathbb{Z}_m$ is a $3$-cycle in $G_n(\mathbb{Z}_m)$. Therefore $gr(G_n(\mathbb{Z}_m))=3$. Now, consider two following cases:\\
\indent {Case 1.} $s'=2$. If $s\geq 3$, then $p_1\mathbb{Z}_m$ --- $p_3\mathbb{Z}_m$ --- $p_1p_3\mathbb{Z}_m$ is a $3$-cycle in $G_n(\mathbb{Z}_m)$. So we may assume that $s=2$. If $\alpha_i\geq 3$ for some $i$, $i=1,2$, then $p_i\mathbb{Z}_m$ --- $p_i^2\mathbb{Z}_m$ --- $p_i^3\mathbb{Z}_m$ is a $3$-cycle in $G_n(\mathbb{Z}_m)$. Also, if $\beta_i\geq 2$ for some $i$, $i=1,2$, then $p_1\mathbb{Z}_m$ --- $p_2\mathbb{Z}_m$ --- $p_1p_2\mathbb{Z}_m$ is a $3$-cycle in $G_n(\mathbb{Z}_m)$. Now, assume that $n=p_1p_2$ and $\alpha_1, \alpha_2=1,2$. It is easy to see that $gr(G_n(\mathbb{Z}_m))=\infty$. Note that $G_{p_1p_2}(\mathbb{Z}_{p_1p_2})\cong \overline{K_2}$, $G_{p_1p_2}(\mathbb{Z}_{p_1^2p_2})\cong K_2\cup \overline{K_2}$, and $G_{p_1p_2}(\mathbb{Z}_{p_1^2p_2^2})\cong K_2\cup K_2\cup \overline{K_3}$. \\
\indent {Case 2.} $s'=1$. If $s\geq 3$, then $p_2\mathbb{Z}_m$ --- $p_3\mathbb{Z}_m$ --- $p_2p_3\mathbb{Z}_m$ is a $3$-cycle in $G_n(\mathbb{Z}_m)$. So assume that $s=2$. If $\beta_1\geq 2$, then $p_1\mathbb{Z}_m$ --- $p_2\mathbb{Z}_m$ --- $p_1p_2\mathbb{Z}_m$ is a $3$-cycle in $G_n(\mathbb{Z}_m)$. Also, if $\alpha_2\geq 3$, then $p_2\mathbb{Z}_m$ --- $p_2^2\mathbb{Z}_m$ --- $p_2^3\mathbb{Z}_m$ is a $3$-cycle in $G_n(\mathbb{Z}_m)$. Now, suppose that $n=p_1$ and $m=p_1^{\alpha_1}p_2$ or $m=p_1^{\alpha_1}p_2^2$. Then
$gr(G_n(\mathbb{Z}_m))=\infty$, since $G_{p_1}(\mathbb{Z}_{p_1^{\alpha_1}p_2})\cong \overline{K_{2\alpha_1}}$ and $G_{p_1}(\mathbb{Z}_{p_1^{\alpha_1}p_2^2})\cong \overline{K_{3\alpha_1-1}}\cup K_2$. Finally, assume that $n=p_1^{\beta_1}$ and $m=p_1^{\alpha_1}$. If $\beta_1\geq 4$, then $p_1\mathbb{Z}_m$ --- $p_1^2\mathbb{Z}_m$ --- $p_1^3\mathbb{Z}_m$ is a $3$-cycle in $G_n(\mathbb{Z}_m)$. It is easy to see that if $\beta_1 \leq 3$, then $gr(G_{p_1^{\beta_1}}(\mathbb{Z}_{p_1^{\alpha_1}}))=\infty$.
}
\end{proof}

As an immediate consequence of Theorem \ref{girth}, we have the following corollary.
\begin{cor}
\label{forest}
{Let $\mathbb{Z}_n$ be a $\mathbb{Z}_m$-module. Then $G_n(\mathbb{Z}_m)$ is a forest if and only if one of the following holds:
\par $(i)$ $n=p_1p_2$, $m=p_1^{\alpha_1}p_2^{\alpha_2}$ and $\alpha_1, \alpha_2 \leq 2$.
\par $(ii)$ $n=p_1$, $m=p_1^{\alpha_1}p_2^{\alpha_2}$ and $\alpha_2 \leq 2$.
\par $(iii)$ $n=p_1^{\beta_1}$, $m=p_1^{\alpha_1}$ and $1\leq \beta_1 \leq 3$.
}
\end{cor}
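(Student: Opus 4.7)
The plan is to reduce the corollary directly to Theorem \ref{girth}. A graph is a forest if and only if it is acyclic, and by Theorem \ref{girth}, $gr(G_n(\mathbb{Z}_m)) \in \{3, \infty\}$; hence $G_n(\mathbb{Z}_m)$ is a forest if and only if $gr(G_n(\mathbb{Z}_m)) = \infty$. So the task becomes to list exactly those pairs $(n,m)$ for which no $3$-cycle appears, and this information is already contained in the proof of Theorem \ref{girth} --- I will simply read off the configurations in which the case analysis failed to produce a triangle.

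For the forward direction, I will run the same case-split on $s'$ that was used in Theorem \ref{girth}. If $s' \geq 3$, the triangle $p_1\mathbb{Z}_m$ --- $p_2\mathbb{Z}_m$ --- $p_1p_2\mathbb{Z}_m$ --- $p_1\mathbb{Z}_m$ destroys the forest property. If $s' = 2$, then either $s \geq 3$ (triangle $p_1\mathbb{Z}_m, p_3\mathbb{Z}_m, p_1p_3\mathbb{Z}_m$) or $s = 2$; in the latter subcase, avoiding the triangles $p_i\mathbb{Z}_m, p_i^2\mathbb{Z}_m, p_i^3\mathbb{Z}_m$ and $p_1\mathbb{Z}_m, p_2\mathbb{Z}_m, p_1p_2\mathbb{Z}_m$ forces $\alpha_1,\alpha_2 \leq 2$ and $\beta_1=\beta_2=1$, i.e.\ $n=p_1p_2$, which is condition~(i). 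If $s' = 1$, the analogous split on $s \in \{1, 2, \geq 3\}$ rules out everything except (ii) (when $s = 2$, forcing $\beta_1 = 1$ and $\alpha_2 \leq 2$, since $\beta_1 \geq 2$ or $\alpha_2 \geq 3$ each spawns a triangle) and (iii) (when $s = 1$, forcing $\beta_1 \leq 3$, since $\beta_1 \geq 4$ yields the triangle on $p_1\mathbb{Z}_m, p_1^2\mathbb{Z}_m, p_1^3\mathbb{Z}_m$).

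For the reverse direction, the proof of Theorem \ref{girth} has already identified each of the three surviving graphs up to isomorphism as a disjoint union of copies of $K_2$ and an edgeless graph --- for instance $G_{p_1 p_2}(\mathbb{Z}_{p_1^2 p_2^2}) \cong K_2 \cup K_2 \cup \overline{K_3}$ and $G_{p_1}(\mathbb{Z}_{p_1^{\alpha_1} p_2^2}) \cong \overline{K_{3\alpha_1 - 1}} \cup K_2$ --- and each such graph is evidently a forest. Hence each of (i)--(iii) is indeed sufficient.

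There is no serious obstacle here; the corollary is essentially a repackaging of Theorem \ref{girth}'s case analysis. The only thing to watch is bookkeeping, in particular making sure that the three families (i)--(iii) together exactly cover the ``no triangle found'' branches of that proof, and that the lower bound $\beta_1 \geq 1$ in (iii) correctly encodes the standing hypothesis $n > 1$.
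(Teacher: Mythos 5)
Your proposal is correct and follows essentially the same route as the paper: the corollary is stated there as an immediate consequence of Theorem \ref{girth}, whose proof already isolates exactly the configurations (i)--(iii) as the ones with infinite girth and exhibits the corresponding graphs as disjoint unions of complete graphs on at most two vertices with null graphs. Your reading of the case analysis and the equivalence ``forest $\Leftrightarrow$ girth $=\infty$'' matches the intended argument.
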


By \cite[Theorem 3.4]{akbtaval}, we find that $G(\mathbb{Z}_m)$ is a tree if and only if $G(\mathbb{Z}_m)$ is a star. Now, we have a similar result.
\begin{cor}
\label{tree}
{ Let $\mathbb{Z}_n$ be a $\mathbb{Z}_m$-module. Then $G_n(\mathbb{Z}_m)$ is a tree if and only if $G_n(\mathbb{Z}_m)$ is a star. In particular, $G_n(\mathbb{Z}_m)$ is a tree if and only if one of the following holds:
\par $(i)$ $n=m=p_1^2$.
\par $(ii)$ $n=m=p_1^3$.
\par $(iii)$ $n=p_1$ and $m=p_1^2$.
}
\end{cor}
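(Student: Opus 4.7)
My plan is to piggy-back on Corollary~\ref{forest}: since every tree is a forest and every star is a tree, it suffices to run through the three families of forests listed there, discard the ones that are disconnected, and check that each of the surviving graphs is in fact a star.

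Families (i) and (ii) of Corollary~\ref{forest} I dispose of by quoting the explicit isomorphism types computed inside the proof of Theorem~\ref{girth}. Each of $\overline{K_2}$, $K_2\cup\overline{K_2}$, $K_2\cup K_2\cup\overline{K_3}$, $\overline{K_{2\alpha_1}}$ and $\overline{K_{3\alpha_1-1}}\cup K_2$ contains at least two vertices with no path between them, so none of them is connected and none is a tree. These two families therefore contribute nothing to the classification.

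The substantive case is family (iii), where $m=p_1^{\alpha_1}$ and $n=p_1^{\beta_1}$ with $1\le\beta_1\le 3$ and $\beta_1\le\alpha_1$ (the latter being needed for $\mathbb{Z}_n$ to be a $\mathbb{Z}_m$-module). Using the adjacency rule from Remark~\ref{zngzm}, the distinct vertices $p_1^i\mathbb{Z}_m$ and $p_1^j\mathbb{Z}_m$ (with $1\le i,j\le\alpha_1-1$) are adjacent if and only if $p_1^{\beta_1}\nmid p_1^{\max(i,j)}$, that is, iff $\max(i,j)<\beta_1$. I then split on $\beta_1$. For $\beta_1\in\{1,2\}$ the graph has no edges at all, so it is a tree only if it has a unique vertex, which forces $\alpha_1=2$ and produces the one-vertex stars $n=m=p_1^2$ of (i) and $n=p_1,\ m=p_1^2$ of (iii). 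For $\beta_1=3$ the only edge is the one joining $p_1\mathbb{Z}_m$ to $p_1^2\mathbb{Z}_m$, while every $p_1^k\mathbb{Z}_m$ with $k\ge 3$ is isolated; so connectedness forces $\alpha_1=3$, giving $n=m=p_1^3$ with graph $K_2$, which is the star of (ii).

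The converse direction is immediate, since each of (i)--(iii) yields either a single vertex or $K_2$, both of which are (trivial) stars. I do not expect any genuine obstacle: once Corollary~\ref{forest} is in hand the whole argument is bookkeeping, and the only care required is in family (iii), where the side constraint $\beta_1\le\alpha_1$ has to be combined with the split on $\beta_1$ to see that sub-cases such as $\alpha_1=2,\beta_1=3$ are vacuous.
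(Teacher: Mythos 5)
Your proposal is correct and follows exactly the route the paper intends: the corollary is derived from Corollary~\ref{forest} by discarding the disconnected forests (families (i) and (ii), using the explicit isomorphism types from the proof of Theorem~\ref{girth}) and checking connectedness in the one-prime family, which yields precisely the single-vertex graphs and $K_2$. The case bookkeeping, including the constraint $\beta_1\le\alpha_1$, is handled correctly.
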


By Corollary \ref{forest}, we can characterize the values of $n$ and $m$ for which $G_n(\mathbb{Z}_m)$ is a null graph.
\begin{cor}
\label{nullgraph}
{ Let $\mathbb{Z}_n$ be a $\mathbb{Z}_m$-module. Then $G_n(\mathbb{Z}_m)$ is a null graph if and only if one of the following holds:
\par $(i)$ $n=p_1$ and $m=p_1^{\alpha_1}p_2$.
\par $(ii)$ $n=p_1$ and $m=p_1^{\alpha_1}$.
\par $(iii)$ $n=p_1^{2}$ and $m=p_1^{\alpha_1}$.
\par $(iv)$ $n=m=p_1p_2$.}
\end{cor}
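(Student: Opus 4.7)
The plan is to start from Corollary \ref{forest}: since a null graph is vacuously a forest, $G_n(\mathbb{Z}_m)$ being null forces $(n,m)$ into one of the three forest configurations listed there. It then suffices to sift through those three families and retain only the configurations that carry no edges, using the adjacency criterion recorded in Remark \ref{zngzm}, namely that distinct ideals $d_1\mathbb{Z}_m$ and $d_2\mathbb{Z}_m$ are joined in $G_n(\mathbb{Z}_m)$ exactly when $n \nmid [d_1,d_2]$.

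For forest case (i), namely $n = p_1p_2$ and $m = p_1^{\alpha_1}p_2^{\alpha_2}$ with $\alpha_1,\alpha_2 \leq 2$, the proof of Theorem \ref{girth} already exhibits the isomorphism types $\overline{K_2}$, $K_2 \cup \overline{K_2}$, and $K_2 \cup K_2 \cup \overline{K_3}$ for the three nontrivial exponent choices; only the first is null, which yields case (iv) of the present corollary. For forest case (ii), namely $n = p_1$ and $m = p_1^{\alpha_1}p_2^{\alpha_2}$ with $1 \leq \alpha_2 \leq 2$, the same theorem records $G_{p_1}(\mathbb{Z}_{p_1^{\alpha_1}p_2}) \cong \overline{K_{2\alpha_1}}$ (null, giving case (i) of the present corollary) and $G_{p_1}(\mathbb{Z}_{p_1^{\alpha_1}p_2^2}) \cong \overline{K_{3\alpha_1-1}} \cup K_2$ (not null, discarded).

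For forest case (iii), namely $n = p_1^{\beta_1}$ and $m = p_1^{\alpha_1}$ with $1\leq\beta_1\leq 3$, the adjacency criterion specializes nicely: two distinct ideals $p_1^i\mathbb{Z}_m$ and $p_1^j\mathbb{Z}_m$ with $1\leq i,j\leq \alpha_1-1$ are joined iff $p_1^{\beta_1} \nmid p_1^{\max(i,j)}$, i.e. iff $\max(i,j) < \beta_1$. For $\beta_1 \in \{1,2\}$ this is impossible for distinct $i,j \geq 1$, so the graph is null, recovering cases (ii) and (iii) of the present corollary. For $\beta_1 = 3$, the constraint $\beta_1 \leq \alpha_1$ forces $\alpha_1 \geq 3$, so the pair $(i,j) = (1,2)$ satisfies $\max(i,j) = 2 < 3$ and produces an edge; this subcase is not null and is discarded. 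The converse direction is immediate from the explicit null isomorphism types recorded above. The only point requiring care is keeping the constraint $\beta_1 \leq \alpha_1$ in play when eliminating $\beta_1 = 3$; otherwise the enumeration is entirely mechanical.
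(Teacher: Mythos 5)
Your proposal is correct and follows exactly the route the paper intends: the paper gives no explicit proof but derives the corollary from Corollary \ref{forest} (a null graph is a forest), and your case-by-case sifting of the three forest families via the adjacency criterion and the explicit isomorphism types from the proof of Theorem \ref{girth} is precisely that argument, carried out carefully (including the point that $\beta_1=3$ forces $\alpha_1\geq 3$ and hence an edge).
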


Throughout the rest of this paper, we use $\mathfrak{A}$ to denote the set of all isolated vertices of $G_n(\mathbb{Z}_m)$.
\begin{lem}
\label{n,d,isole}
{ Let $\mathbb{Z}_n$ be a $\mathbb{Z}_m$-module. If $G_n(\mathbb{Z}_m)$ is not a null graph, then $d\mathbb{Z}_m$ is an isolated vertex of $G_n(\mathbb{Z}_m)$ if and only if $n|d$, except for the case $n=p_1p_2$, $m=p_1^{\alpha_1}p_2$ and $\alpha_1\geq 2$, in which case $G_n(\mathbb{Z}_m)=K_{\alpha_1}\cup \overline{K_{\alpha_1}}$.}
\end{lem}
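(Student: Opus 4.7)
One direction is immediate from Remark~\ref{zngzm}: if $n\mid d$, then $n\mid[d,d']$ for every other divisor $d'$ of $m$, so $d\mathbb{Z}_m$ has no neighbor. For the converse, assume $G_n(\mathbb{Z}_m)$ is not a null graph and $d\mathbb{Z}_m$ is isolated while $n\nmid d$. Writing $d=p_1^{\gamma_1}\cdots p_s^{\gamma_s}$, fix $i_0\in S'$ with $\gamma_{i_0}<\beta_{i_0}$. The plan is to exhibit a non-trivial proper divisor $d'\ne d$ of $m$ with $p_{i_0}\nmid d'$: then the $p_{i_0}$-exponent of $[d,d']$ equals $\gamma_{i_0}<\beta_{i_0}$, so $n\nmid[d,d']$, and by Remark~\ref{zngzm} the vertices $d\mathbb{Z}_m$ and $d'\mathbb{Z}_m$ are adjacent, contradicting isolation.

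The candidates for $d'$ are the non-trivial divisors of $m_0:=m/p_{i_0}^{\alpha_{i_0}}$, of which there are $\prod_{j\ne i_0}(\alpha_j+1)-1$. Whenever this count is $\ge 2$, one such divisor differs from $d$ and the construction succeeds; this holds if $s\ge 3$, or if $s=2$ with $\alpha_{j_0}\ge 2$ (where $j_0\ne i_0$). Only two residual cases require further work: \textbf{(A)} $s=1$, and \textbf{(B)} $s=2$ with $\alpha_{j_0}=1$ and $d=p_{j_0}$ (the unique candidate $p_{j_0}$ coincides with $d$).

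In case~(A), $m=p_1^{\alpha_1}$ and $n=p_1^{\beta_1}$, and Corollary~\ref{nullgraph}(ii)--(iii) forces $\beta_1\ge 3$; from $1\le\gamma_1<\beta_1\le\alpha_1$ the set $\{1,\dots,\beta_1-1\}\setminus\{\gamma_1\}$ is non-empty, and any $k$ in it yields a neighbor $d'=p_1^k$, a contradiction. In case~(B), $m=p_{i_0}^{\alpha_{i_0}}p_{j_0}$ and $d=p_{j_0}$; for any other vertex $d'\in\{p_{i_0}^{k'}\mathbb{Z}_m,\,p_{i_0}^{k'}p_{j_0}\mathbb{Z}_m\}$ one computes $[d,d']=p_{i_0}^{k'}p_{j_0}$, so using $\beta_{j_0}\le\alpha_{j_0}=1$, adjacency is equivalent to $1\le k'<\beta_{i_0}$. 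Hence $d\mathbb{Z}_m$ is isolated if and only if $\beta_{i_0}=1$. Within $\beta_{i_0}=1$: the sub-case $\beta_{j_0}=0$ gives $n=p_{i_0}$, a null graph by Corollary~\ref{nullgraph}(i); $\beta_{j_0}=1$ with $\alpha_{i_0}=1$ gives $n=m=p_{i_0}p_{j_0}$, a null graph by Corollary~\ref{nullgraph}(iv); and $\beta_{j_0}=1$ with $\alpha_{i_0}\ge 2$ is exactly the stated exceptional configuration.

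Finally, in the exceptional case $n=p_1p_2$, $m=p_1^{\alpha_1}p_2$, $\alpha_1\ge 2$, I partition the $2\alpha_1$ non-zero proper divisor-ideals of $\mathbb{Z}_m$ into $A=\{p_1^k\mathbb{Z}_m:1\le k\le\alpha_1\}$ and $B=\{p_1^kp_2\mathbb{Z}_m:0\le k\le\alpha_1-1\}$. Pairwise lcms within $A$ are pure powers of $p_1$, not divisible by $n$, so $A$ induces $K_{\alpha_1}$; in any pair involving a $B$-element the lcm already contains $p_2$, and the other factor (whether from $A$ or another $B$-element) contributes a positive $p_1$-exponent, hence the lcm is divisible by $p_1p_2=n$. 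Therefore $B$ is an independent set with no edges to $A$, and $G_n(\mathbb{Z}_m)\cong K_{\alpha_1}\cup\overline{K_{\alpha_1}}$. I expect the main obstacle to be the fine bookkeeping in~(B), where one must verify that every combination of $\alpha_{i_0},\beta_{i_0},\beta_{j_0}$ either collapses to a null graph via Corollary~\ref{nullgraph} or to the single stated exception, so that no spurious exception is missed.
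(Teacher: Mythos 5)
Your proof is correct and follows essentially the same strategy as the paper's: for each divisor $d$ with $n\nmid d$ you exhibit an explicit neighbour via the lcm criterion of Remark~1, reduce the stubborn configurations ($s=1$, and $s=2$ with $d=p_{j_0}$) to null graphs via Corollary~4, and verify directly that the exceptional graph is $K_{\alpha_1}\cup\overline{K_{\alpha_1}}$. Your uniform count of the nontrivial divisors of $m/p_{i_0}^{\alpha_{i_0}}$ merely repackages the paper's case-by-case choice of witness vertex.
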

\begin{proof}
{ Clearly, if $n|d$, then $d\mathbb{Z}_m$ is an isolated vertex of $G_n(\mathbb{Z}_m)$. For the other side suppose that $G_n(\mathbb{Z}_m)$ is not a null graph, $d=p_1^{r_1}\cdots p_s^{r_s}$ is a divisor of $m$ and $n\nmid d$. Since $n\nmid d$, we may assume that $r_1< \beta_{1}$. If $s\geq 3$, then $d\mathbb{Z}_m$ is adjacent to one of $p_2^{\alpha_2}\cdots p_s^{\alpha_s}\mathbb{Z}_m$ or $p_3^{\alpha_3}\cdots p_s^{\alpha_s}\mathbb{Z}_m$. Now, suppose that $s=2$. If $r_1\neq 0$, then $d\mathbb{Z}_m$ and $p_2^{\alpha_2}\mathbb{Z}_m$ are adjacent. Hence $r_1=0$ and $d=p_2^{r_2}$. If $r_2\geq 2$, then $d\mathbb{Z}_m$ and $p_2\mathbb{Z}_m$ are adjacent. Therefore $d=p_2$. If $\beta_1\geq 2$, then $d\mathbb{Z}_m$ and $p_1\mathbb{Z}_m$ are adjacent. Thus $\beta_1=1$. If $\alpha _2\geq 2$, then $d\mathbb{Z}_m$ and $p_2^{2}\mathbb{Z}_m$ are adjacent. So $\alpha_2 =1$. Then $m=p_1^{\alpha_1}p_2$ and $n=p_1$ or $n=p_1p_2$. If $n=p_1$, then by Corollary \ref{nullgraph}, $G_n(\mathbb{Z}_m)$ is a null graph, a contradiction. Therefore $n=p_1p_2$, $m=p_1^{\alpha_1}p_2$ and $\alpha_1\geq 2$, in which case $d\mathbb{Z}_m$ is an isolated vertex of $G_n(\mathbb{Z}_m)$. Moreover, $\mathfrak{A}= \{p_1^{r_1}p_2\mathbb{Z}_m : 0\leq r_1\leq \alpha_1-1\}$ and $V(G_n(\mathbb{Z}_m))\setminus \mathfrak{A}=\{p_1^{r_1}\mathbb{Z}_m : 1\leq r_1\leq \alpha_1\}$. Clearly, $p_1^{r_1}\mathbb{Z}_m$ and $p_1^{r_2}\mathbb{Z}_m$ are adjacent, where $1\leq r_1<r_2\leq \alpha_1$. This implies that $G_{p_1p_2}(\mathbb{Z}_{p_1^{\alpha_1}p_2})=K_{\alpha_1}\cup \overline{K_{\alpha_1}}$, where $\alpha_1\geq 2$. Next, suppose that $s=1$. Since $G_n(\mathbb{Z}_m)$ is not a null graph, by Corollary \ref{nullgraph}, we conclude that $\beta_1\geq3$. Therefore $d\mathbb{Z}_m$ is adjacent to $p_1\mathbb{Z}_m$ or $p_1^{2}\mathbb{Z}_m$. This completes the proof.}
\end{proof}

\begin{lem}
\label{cardinalisole}
{ Let $\mathbb{Z}_n$ be a $\mathbb{Z}_m$-module. If $G_n(\mathbb{Z}_m)$ is not a null graph, then $$|\mathfrak{A}|=\left\{
                  \begin{array}{ll}
                    \alpha_1, & \hbox{if $n= p_1p_2$, $m=p_1^{\alpha_1}p_2$ and $\alpha_1\geq 2$;} \\
                    \prod_{i=1}^{s}(\alpha_i-\beta_i+1)-1, & \hbox{otherwise.}
                  \end{array}
                \right.
                $$}
\end{lem}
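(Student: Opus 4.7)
My plan is to rely entirely on the characterization of isolated vertices supplied by Lemma~\ref{n,d,isole} and then reduce $|\mathfrak{A}|$ to a divisor count. I would split the argument along the same dichotomy used in that lemma.

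For the exceptional configuration $n = p_1 p_2$, $m = p_1^{\alpha_1} p_2$ with $\alpha_1 \geq 2$, Lemma~\ref{n,d,isole} already determines the set explicitly: $\mathfrak{A} = \{p_1^{r_1} p_2 \mathbb{Z}_m : 0 \leq r_1 \leq \alpha_1 - 1\}$. A direct count of the admissible exponents $r_1$ gives $|\mathfrak{A}| = \alpha_1$, and nothing more is required.

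For every other case in which $G_n(\mathbb{Z}_m)$ is not a null graph, Lemma~\ref{n,d,isole} tells us that the isolated vertices are exactly those $d\mathbb{Z}_m$ with $d \mid m$, $n \mid d$, and $d \neq m$ (the last condition because the zero ideal is not a vertex). Writing $d = nk$ sets up a bijection between such $d$ and the divisors $k$ of $m/n = \prod_{i=1}^{s} p_i^{\alpha_i - \beta_i}$; the number of such $k$ is $\prod_{i=1}^{s}(\alpha_i - \beta_i + 1)$, and we subtract $1$ to exclude $k = m/n$, i.e.\ $d = m$. The case $d = 1$ is automatically excluded since $n > 1$ together with $n \mid d$ forces $d \geq n > 1$. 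This yields the claimed $\prod_{i=1}^{s}(\alpha_i - \beta_i + 1) - 1$.

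I do not anticipate any genuine obstacle, since the conceptual content is absorbed in Lemma~\ref{n,d,isole} and what remains is a clean counting argument. The only two points one must be careful about are to remove the vertex $d = m$ (since it corresponds to the zero ideal) and to remember that the exceptional configuration supplies an extra isolated vertex, namely $p_2 \mathbb{Z}_m$, which is \emph{not} captured by the divisibility condition $n \mid d$; this is precisely what forces the separate formula in the first branch of the lemma.
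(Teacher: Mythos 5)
Your proof is correct and follows essentially the same route as the paper: both arguments reduce the count to the characterization in Lemma~\ref{n,d,isole}, enumerate the divisors $d$ of $m$ with $n\mid d$ and $d\neq m$ via the exponent ranges $\beta_i\leq r_i\leq\alpha_i$, and read off the exceptional case directly from the explicit description of $\mathfrak{A}$ there. Your closing remark that the extra isolated vertex $p_2\mathbb{Z}_m$ in the exceptional configuration is not captured by the condition $n\mid d$ is a correct and useful sanity check, though not needed beyond what Lemma~\ref{n,d,isole} already provides.
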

\begin{proof}
{ By Lemma \ref{n,d,isole}, we know that if $n= p_1p_2$, $m=p_1^{\alpha_1}p_2$ and  $\alpha_1\geq 2$, then $|\mathfrak{A}|=\alpha_1$. Otherwise, from Lemma \ref{n,d,isole}, we find that $d\mathbb{Z}_m$ is an isolated vertex of $G_n(\mathbb{Z}_m)$ if and only if $n$ divides $d$. So $\mathfrak{A}=\{d\mathbb{Z}_m:n$ divides $d$, $d$ divides $m$, $d\neq m\}=\{p_1^{r_1}\cdots p_s^{r_s}\mathbb{Z}_m : \beta _i\leq r_i\leq \alpha _i\}\setminus\{0\}$. This implies that $|\mathfrak{A}|=\prod_{i=1}^{s}(\alpha_i-\beta_i+1)-1$ and the proof is complete.}
\end{proof}

\begin{cor}\label{noisolatedvertex}
{ Let $\mathbb{Z}_n$ be a $\mathbb{Z}_m$-module. Then $G_n(\mathbb{Z}_m)$ contains no isolated vertex if and only if $n=m\neq p_1, p_1^2, p_1p_2$.}
\end{cor}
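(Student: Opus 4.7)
The plan is to deduce the statement by combining Corollary \ref{nullgraph} and Lemma \ref{cardinalisole}, which together describe exactly when isolated vertices occur. The corollary is essentially a bookkeeping exercise once those two results are in hand; the main subtlety is the vacuous case $m$ prime, where $I(\mathbb{Z}_m)^{\ast}$ is empty. This case is implicitly excluded from the statement (via the explicit exception $m\neq p_1$), so I will treat it as $G_n(\mathbb{Z}_m)$ having at least one vertex.

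For the forward direction, suppose $G_n(\mathbb{Z}_m)$ contains no isolated vertex, so in particular its vertex set is non-empty (ruling out $m=p_1$). A non-empty null graph consists entirely of isolated vertices, hence $G_n(\mathbb{Z}_m)$ cannot be null, and by Corollary \ref{nullgraph} none of $(i)$-$(iv)$ occurs. Now I apply Lemma \ref{cardinalisole}. Its exceptional case $n=p_1p_2$, $m=p_1^{\alpha_1}p_2$, $\alpha_1\geq 2$ would give $|\mathfrak{A}|=\alpha_1\geq 2$, contradicting the hypothesis. So the generic clause of the lemma applies and yields
\[
\prod_{i=1}^{s}(\alpha_i-\beta_i+1)-1 \;=\; |\mathfrak{A}| \;=\; 0,
\]
forcing $\alpha_i=\beta_i$ for every $i$, i.e.\ $n=m$. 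Combined with the exclusions from Corollary \ref{nullgraph} (which under $n=m$ rule out $m=p_1^2$ via $(iii)$ and $m=p_1p_2$ via $(iv)$) and with $m\neq p_1$ from the non-emptiness above, I obtain $n=m\notin\{p_1,p_1^2,p_1p_2\}$.

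For the converse, assume $n=m$ and $m\notin\{p_1,p_1^2,p_1p_2\}$. I first verify that $G_n(\mathbb{Z}_m)$ is not a null graph by running through Corollary \ref{nullgraph}: case $(i)$ requires $n\neq m$; case $(ii)$ under $n=m$ forces $m=p_1$; case $(iii)$ under $n=m$ forces $m=p_1^2$; and case $(iv)$ is $m=p_1p_2$. All four are excluded by hypothesis. Next, the exceptional configuration of Lemma \ref{cardinalisole} cannot arise under $n=m$ either: it would require $n=p_1p_2$ together with $\alpha_1\geq 2$, while $n=m$ forces $\alpha_1=1$. Therefore the generic clause of Lemma \ref{cardinalisole} applies and gives
\[
|\mathfrak{A}| \;=\; \prod_{i=1}^{s}(\alpha_i-\beta_i+1)-1 \;=\; \prod_{i=1}^{s} 1 -1 \;=\; 0,
\]
so $G_n(\mathbb{Z}_m)$ has no isolated vertex, completing the proof. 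The only place where care is needed is tracking the vacuous $m=p_1$ case and ensuring the exceptional clause of Lemma \ref{cardinalisole} is explicitly discarded in both directions; no further case analysis is required.
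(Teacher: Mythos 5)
Your proof is correct and follows essentially the same route as the paper: deduce $n=m$ from the vanishing of $|\mathfrak{A}|$ and then rule out $m=p_1,p_1^2,p_1p_2$ via Corollary \ref{nullgraph} and Lemma \ref{cardinalisole}. You simply spell out the case-checking (and the vacuous $m=p_1$ convention) that the paper leaves as ``obvious.''
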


\begin{proof}
{ One side is obvious. For the other side assume that $G_n(\mathbb{Z}_m)$ contains no isolated vertex. Hence $n=m$. By Corollary \ref{nullgraph} and Lemma \ref{cardinalisole}, it is clear that $m\neq p_1,p_1^2, p_1p_2$.
}
\end{proof}

\begin{thm}
{ Let $\mathbb{Z}_n$ be a $\mathbb{Z}_m$-module. If $G_n(\mathbb{Z}_m)$ is not a null graph, then
\begin{center}
$\alpha(G_n(\mathbb{Z}_m))=\left\{
                  \begin{array}{ll}
                    \alpha_1+1, & \hbox{if $n= p_1p_2$, $m=p_1^{\alpha_1}p_2$ and $\alpha_1\geq 2$;} \\
                    \prod_{i=1}^{s}(\alpha_i-\beta_i+1)-1+s', & \hbox{otherwise.}
                  \end{array}
                \right.
                $
\end{center}}
\end{thm}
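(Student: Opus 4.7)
The plan is to single out the anomalous case $n=p_1p_2$, $m=p_1^{\alpha_1}p_2$ with $\alpha_1\geq 2$ and attack every other case by splitting an independent set into isolated and non-isolated parts. In the anomalous case, Lemma~\ref{n,d,isole} already identifies $G_n(\mathbb{Z}_m)\cong K_{\alpha_1}\cup \overline{K_{\alpha_1}}$, so $\alpha(G_n(\mathbb{Z}_m))=\alpha_1+1$ follows by combining all $\alpha_1$ isolated vertices with one vertex of $K_{\alpha_1}$.

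In every other case, any maximum independent set can be taken to contain the $\prod_{i=1}^s(\alpha_i-\beta_i+1)-1$ isolated vertices supplied by Lemma~\ref{cardinalisole}, so the task reduces to showing that the subgraph induced on the non-isolated vertices has independence number exactly $s'$. For the lower bound, for each $i\in S'$ I would take
\[
N_i=p_i^{\beta_i-1}\prod_{j\in S\setminus\{i\}}p_j^{\alpha_j},
\]
a divisor of $m$ with $1<N_i<m$. Because the exponent of $p_i$ in $N_i$ is $\beta_i-1<\beta_i$, we have $n\nmid N_i$, and Remark~\ref{zngzm} gives that $N_i\mathbb{Z}_m$ is non-isolated. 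For distinct $i,j\in S'$, each prime $p_k$ with $\beta_k>0$ appears in $[N_i,N_j]$ with exponent $\alpha_k\geq\beta_k$, so $n\mid[N_i,N_j]$ and $N_i\mathbb{Z}_m$, $N_j\mathbb{Z}_m$ are non-adjacent. Distinctness of the $N_i\mathbb{Z}_m$ is immediate from their $p_i$-exponents.

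For the matching upper bound, attach to each non-isolated vertex $d\mathbb{Z}_m$ with $d=p_1^{r_1}\cdots p_s^{r_s}$ the set $I(d)=\{i\in S':r_i<\beta_i\}$; this is non-empty precisely because $n\nmid d$. If $d_1\mathbb{Z}_m$ and $d_2\mathbb{Z}_m$ are distinct non-isolated vertices in an independent set, non-adjacency gives $n\mid[d_1,d_2]$, which forces $\max(r_i(d_1),r_i(d_2))\geq\beta_i$ for every $i\in S'$ and hence $I(d_1)\cap I(d_2)=\emptyset$. The sets $I(d)$ therefore form a pairwise disjoint family of non-empty subsets of $S'$, bounding the size of any independent set of non-isolated vertices by $s'$. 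The main obstacle I anticipate is verifying in the narrow parameter ranges $s=1$ and $s'=1$ that $N_i\mathbb{Z}_m$ is a genuine vertex; here one invokes Corollary~\ref{nullgraph} to rule out the null cases, so that for instance $s=1$ together with the non-null hypothesis forces $\beta_1\geq 3$ and hence $N_1=p_1^{\beta_1-1}>1$.
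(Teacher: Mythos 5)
Your proposal is correct and follows essentially the same route as the paper: the same isolated/non-isolated decomposition, the same witnesses $p_i^{\beta_i-1}\prod_{j\neq i}p_j^{\alpha_j}$ for the lower bound, and the same pigeonhole argument (phrased via the pairwise disjoint non-empty sets $I(d)\subseteq S'$) for the upper bound. The only cosmetic difference is that you make explicit the degenerate checks for $s=1$ and $s'=1$, which the paper leaves implicit.
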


\begin{proof}
{ By Lemma \ref{n,d,isole}, it is clear that if $n= p_1p_2$, $m=p_1^{\alpha_1}p_2$ and $\alpha_1\geq 2$, then $\alpha(G_n(\mathbb{Z}_m))=\alpha_1+1$. Otherwise, with no loss of generality we may assume that  $S'=\{1,\ldots,s'\}$. Let $B=\mathfrak{A}\cup\left\{p_j^{\beta_j-1}\prod_{i\neq j} p_i^{\alpha_i}\mathbb{Z}_m : 1\leq j\leq s'\right\}$. Obviously, $n\nmid p_j^{\beta_j-1}\prod_{i\neq j} p_i^{\alpha_i}$, for every $j$, $1\leq j\leq s'$. Hence by Lemma \ref{n,d,isole}, $p_j^{\beta_j-1}\prod_{i\neq j} p_i^{\alpha_i}\mathbb{Z}_m$ is not an isolated vertex, for every $j$, $1\leq j\leq s'$. Also, it is easy to see that $B$ is an independent set and so $\alpha(G_n(\mathbb{Z}_m))\geq |\mathfrak{A}|+s'=\prod_{i=1}^{s}(\alpha_i-\beta_i+1)-1+s'$. Moreover, if $C$ is an independent set of $G_n(\mathbb{Z}_m)\setminus \mathfrak{A}$ and $|C|\geq s'+1$, then by Pigeonhole Principle we conclude that there exist $\prod_{i=1}^{s}p_i^{r_i}\mathbb{Z}_m,\prod_{i=1}^{s}p_i^{r'_i}\mathbb{Z}_m\in C$ such that $r_j,r'_j< \beta_j$, for some $j$, $1\leq j\leq s'$. This implies that $\prod_{i=1}^{s}p_i^{r_i}\mathbb{Z}_m$ and $\prod_{i=1}^{s}p_i^{r'_i}\mathbb{Z}_m$ are adjacent, which is impossible. Therefore
\begin{center}
$\alpha(G_n(\mathbb{Z}_m))= \prod_{i=1}^{s}(\alpha_i-\beta_i+1)-1+s'$.
\end{center}
}
\end{proof}

We denote $\{i\in S \ :\, r_i< \beta _i\}$ by $D_d$, where $d=p_1^{r_1}\cdots p_s^{r_s}$ is a divisor of $m$. Obviously, $D_d\subseteq S'$.
\begin{thm}\label{deg}
{ Let $\mathbb{Z}_n$ be a $\mathbb{Z}_m$-module and $d=p_1^{r_1}\cdots p_s^{r_s}(\neq 1,m)$ be a divisor of $m$. If $n$ divides $d$, then $deg(d\mathbb{Z}_m)=0$ and otherwise
 $$deg(d\mathbb{Z}_m)=\prod_{i=1}^{s}(\alpha_i+1)-2-\prod_{i\notin D_d}(\alpha_i+1)\prod_{i\in D_d}(\alpha_i-\beta_i+1).$$
 }
\end{thm}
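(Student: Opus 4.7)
The plan is to translate adjacency into a divisibility condition, count non-neighbors by inclusion/exclusion on the prime exponents, and then subtract from the total.

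The case $n\mid d$ is immediate from Remark \ref{zngzm} (or Lemma \ref{n,d,isole}), which gives $\deg(d\mathbb{Z}_m)=0$. So from now on I would assume $n\nmid d$, so that $D_d\neq\varnothing$ and $D_d\subseteq S'$. By Remark \ref{zngzm}, another vertex $d'\mathbb{Z}_m=p_1^{r'_1}\cdots p_s^{r'_s}\mathbb{Z}_m$ (with $d'\neq 1,m$) is adjacent to $d\mathbb{Z}_m$ if and only if $n\nmid[d,d']$. Since $[d,d']=\prod_i p_i^{\max(r_i,r'_i)}$, the condition $n\mid[d,d']$ becomes $\beta_i\leq\max(r_i,r'_i)$ for every $i$. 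For $i\notin D_d$ this already holds because $r_i\geq\beta_i$; for $i\in D_d$ it forces $r'_i\geq\beta_i$. Thus $n\mid[d,d']$ is equivalent to $r'_i\geq\beta_i$ for every $i\in D_d$, with no constraint on $r'_i$ for $i\notin D_d$.

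Next I would count. The number of divisors $d'$ of $m$ (with no restriction) satisfying this last condition is
\[
N:=\prod_{i\notin D_d}(\alpha_i+1)\prod_{i\in D_d}(\alpha_i-\beta_i+1),
\]
by multiplying the independent choices of exponents. The degree of $d\mathbb{Z}_m$ is $|I(\mathbb{Z}_m)^*|-1$ minus the number of proper non-zero ideals $d'\mathbb{Z}_m\neq d\mathbb{Z}_m$ with $n\mid[d,d']$. To pass from $N$ to that number I must discard the spurious $d'\in\{1,m,d\}$. Since $D_d\neq\varnothing$ and $\beta_i\geq 1$ for $i\in D_d$, the choice $d'=1$ (all $r'_i=0$) fails the condition and is \emph{not} counted in $N$; likewise, $d'=d$ has $r_i<\beta_i$ on $D_d$ and is not counted in $N$; but $d'=m$ trivially satisfies the condition and \emph{is} counted. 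Hence the number of $d'\mathbb{Z}_m\in I(\mathbb{Z}_m)^*\setminus\{d\mathbb{Z}_m\}$ with $n\mid[d,d']$ is exactly $N-1$.

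Putting these together,
\[
\deg(d\mathbb{Z}_m)=\bigl(|I(\mathbb{Z}_m)^*|-1\bigr)-(N-1)=\prod_{i=1}^{s}(\alpha_i+1)-2-N,
\]
which is the desired formula. The only delicate point is the bookkeeping of whether $1$, $d$, and $m$ are counted by the product $N$; this is the step that uses $n\nmid d$ (to ensure $D_d\neq\varnothing$) in an essential way, and it is the place where a careless count would be off by one.
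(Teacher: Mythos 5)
Your proposal is correct and follows essentially the same route as the paper: translate non-adjacency into the condition $r'_i\geq\beta_i$ for all $i\in D_d$, count the $N=\prod_{i\notin D_d}(\alpha_i+1)\prod_{i\in D_d}(\alpha_i-\beta_i+1)$ divisors satisfying it, and subtract. Your explicit bookkeeping of which of $1$, $d$, $m$ are counted by $N$ is in fact more careful than the paper's one-line version of the same count.
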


\begin{proof}
{If $n|d$, then $d\mathbb{Z}_m$ is an isolated vertex and so $deg(d\mathbb{Z}_m)=0$. Assume that $n$ does not divide $d$. Then $D_d$ is nonempty. Clearly, $d\mathbb{Z}_m$ and $p_1^{t_1}\cdots p_s^{t_s}\mathbb{Z}_m$ are not adjacent if and only if $t_i\geq \beta_i$ for each $i\in D_d$. Thus the number of vertices not adjacent to $d\mathbb{Z}_m$ is $\prod_{i\notin D_d}(\alpha_i+1)\prod_{i\in D_d}(\alpha_i-\beta_i+1)-1$ and hence the number of its neighbors is $\prod_{i=1}^{s}(\alpha_i+1)-2-\prod_{i\notin D_d}(\alpha_i+1)\prod_{i\in D_d}(\alpha_i-\beta_i+1)$.
}
\end{proof}

\begin{thm}
\label{deltabozorg}
{ Suppose that $\mathbb{Z}_n$ is a $\mathbb{Z}_m$-module and $G_n(\mathbb{Z}_m)$ is not a null graph. If $n=p_1\cdots p_s$, then
$\Delta(G_n(\mathbb{Z}_m))=\prod_{i=1}^{s}(\alpha_i+1)-2-(\alpha_1+1)\prod_{i=2}^{s}\alpha_i$, where $\alpha_1\geq \cdots\geq \alpha_s$ and otherwise
 $\Delta(G_n(\mathbb{Z}_m))=\prod_{i=1}^{s}(\alpha_i+1)-2-\prod_{i=1}^{s}(\alpha_i-\beta_i+1)$.
}
\end{thm}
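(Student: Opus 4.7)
The plan is to invoke Theorem~\ref{deg} and reduce to an optimization problem: writing $N=\prod_{i=1}^{s}(\alpha_i+1)-2$ and
$$f(d)=\prod_{i\notin D_d}(\alpha_i+1)\prod_{i\in D_d}(\alpha_i-\beta_i+1),$$
every non-isolated $d\mathbb{Z}_m$ satisfies $\deg(d\mathbb{Z}_m)=N-f(d)$, so $\Delta(G_n(\mathbb{Z}_m))=N-\min_{d}f(d)$, where $d$ ranges over divisors of $m$ with $d\neq 1,m$ and $n\nmid d$. The key observation is that each index $i$ contributes to $f(d)$ either $\alpha_i+1$ (if $i\notin D_d$) or $\alpha_i-\beta_i+1$ (if $i\in D_d$), and the second is $\leq$ the first with equality iff $\beta_i=0$. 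Since $D_d\subseteq S'$ is automatic, this monotonicity pushes us toward $D_d=S'$, which, when achievable, yields $f(d)=\prod_{i=1}^{s}(\alpha_i-\beta_i+1)$.

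To realize $D_d=S'$, I would set $r_i=0$ for every $i\in S'$; this makes $D_d=S'$ and guarantees $d\neq m$. The outstanding requirement $d\neq 1$ asks for at least one positive $r_i$: if $S'\subsetneq S$, pick some $j\notin S'$ and set $r_j=1$ (which does not affect $D_d$); if instead $S'=S$ but some $\beta_j\geq 2$, set $r_j=1<\beta_j$, still keeping $j\in D_d$. These two alternatives are jointly equivalent to the hypothesis $n\neq p_1\cdots p_s$, and together they establish the ``otherwise'' formula of the theorem.

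The remaining case is $n=p_1\cdots p_s$, so $\beta_i=1$ for all $i$ and $S'=S$. Now $D_d=S$ would force $r_i=0$ for every $i$, giving the forbidden $d=1$; hence $D_d\subsetneq S$. The same monotonicity argument still applies: removing an index $i$ from $D_d$ multiplies $f(d)$ by $(\alpha_i+1)/\alpha_i>1$, so $|D_d|=s-1$ is optimal. With $D_d=S\setminus\{i_0\}$ we obtain $f(d)=(\alpha_{i_0}+1)\prod_{j\neq i_0}\alpha_j$, which is minimized by taking $i_0$ with the largest $\alpha_{i_0}$, i.e.\ $i_0=1$ under the assumed ordering $\alpha_1\geq\cdots\geq\alpha_s$; the witness $d=p_1$ then delivers the first formula.

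The main obstacle will be keeping the monotonicity argument clean in the presence of the side-conditions $n\nmid d$ and $d\neq 1,m$, and verifying in each sub-case that the proposed witness is indeed a non-isolated vertex. A small sanity check—for example, that when $s=1$ and $n=p_1\cdots p_s=p_1$ the situation falls under Corollary~\ref{nullgraph}(ii) and is ruled out by the ``not a null graph'' hypothesis—is needed so that empty products like $\prod_{i=2}^{s}\alpha_i$ appear only where the first-case formula is genuinely being asserted.
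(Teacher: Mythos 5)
Your proposal is correct and follows essentially the same route as the paper: both reduce to Theorem~\ref{deg} and determine which achievable set $D_d$ minimizes the non-neighbour count $\prod_{i\notin D_d}(\alpha_i+1)\prod_{i\in D_d}(\alpha_i-\beta_i+1)$, with the same witnesses ($d=p_1$ in the squarefree case, and a divisor with $D_d=S'$ otherwise). The only difference is presentational: for $n\neq p_1\cdots p_s$ the paper certifies maximality by exhibiting a vertex adjacent to every non-isolated vertex, while your monotonicity argument handles both cases uniformly and spells out the optimization that the paper's treatment of the case $n=p_1\cdots p_s$ leaves implicit.
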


\begin{proof}
{ First, suppose that $n=p_1\cdots p_s$. By Theorem \ref{deg}, if $d=p_1^{r_1}\cdots p_s^{r_s}$ is a divisor of $m$ and $n$ does not divide $d$, then $deg(d\mathbb{Z}_m)=\prod_{i=1}^{s}(\alpha_i+1)-2-\prod_{i\notin D_d}(\alpha_i+1)\prod_{i\in D_d}\alpha_i$.
If $\alpha_1\geq \cdots\geq \alpha_s$, then $deg(d\mathbb{Z}_m)=\Delta(G_n(\mathbb{Z}_m))$ if and only if $\prod_{i\notin D_d}(\alpha_i+1)\prod_{i\in D_d}\alpha_i=(\alpha_1+1)\prod_{i=2}^{s}\alpha_i$. Let $\{i\in S : \alpha_i=\alpha_1 \}=\{1,\ldots,k\}$ for some $k$, $1\leq k\leq s$. In fact, $\left\{p_i^{r_i}\mathbb{Z}_m : 1\leq i\leq k, \ 1\leq r_i\leq \alpha_i\right\}$ is the set of all vertices with maximum degree. So $\Delta(G_n(\mathbb{Z}_m))=\prod_{i=1}^{s}(\alpha_i+1)-2-(\alpha_1+1)\prod_{i=2}^{s}\alpha_i$.\\
\indent Now, assume that there is an integer $j$ such that $\beta_j\neq 1$. We claim that there exist some vertices adjacent to all non-isolated vertices and so $\Delta(G_n(\mathbb{Z}_m))=\prod_{i=1}^{s}(\alpha_i+1)-2-\prod_{i=1}^{s}(\alpha_i-\beta_i+1)$. If $\beta_j=0$, then $p_j\mathbb{Z}_m$ is adjacent to all non-isolated vertices. Otherwise, $\beta_j\geq 2$. With no loss of generality suppose that $S'=\{1,\ldots,s'\}$. Let $d=p_1^{\beta_1-1}\cdots p_{s'}^{\beta_{s'}-1}$. Then $d\mathbb{Z}_m$ is adjacent to all non-isolated vertices. The claim is proved.
}
\end{proof}

\begin{thm}
\label{rasimotaselbahame}
{ Let $\mathbb{Z}_n$ be a $\mathbb{Z}_m$-module. Then $G_n(\mathbb{Z}_m)$ has a vertex which is adjacent to all other vertices if and only if $n=m$ and $\alpha_j\geq 2$, for some $j$, $1\leq j\leq s$.}
\end{thm}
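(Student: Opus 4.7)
The plan is to reduce the existence of a universally adjacent vertex to the degree formula in Theorem \ref{deg}. A vertex $d\mathbb{Z}_m$ is adjacent to all other vertices precisely when $\deg(d\mathbb{Z}_m) = |V(G_n(\mathbb{Z}_m))| - 1 = \prod_{i=1}^{s}(\alpha_i+1) - 3$. In particular such a $d\mathbb{Z}_m$ cannot be isolated, so $n \nmid d$, and Theorem \ref{deg} applies. Comparing the expressions, the condition reduces to
\[
\prod_{i\notin D_d}(\alpha_i+1)\prod_{i\in D_d}(\alpha_i-\beta_i+1) \;=\; 1.
\]
This is the central equation I will exploit.

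Next I would analyze this product factor by factor. Since each $\alpha_i \geq 1$, the factor $\alpha_i + 1 \geq 2$ for every $i$, so the equation forces $D_d = S$; that is, $r_i < \beta_i$ for all $i \in S$. Since $D_d \subseteq S'$, this also forces $S' = S$, i.e.\ every prime $p_i$ dividing $m$ must actually divide $n$ (so $\beta_i \geq 1$ for all $i$). Turning to the second product, each factor $\alpha_i - \beta_i + 1 \geq 1$ must equal $1$, giving $\alpha_i = \beta_i$ for all $i$, hence $n = m$. Finally, $d \neq 1$ forces some $r_j \geq 1$, and combined with $r_j < \beta_j = \alpha_j$ this gives $\alpha_j \geq 2$ for that index $j$. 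This proves the ``only if'' direction.

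For the converse, assuming $n = m$ and $\alpha_j \geq 2$ for some $j$, I would take $d = p_j$, so $r_j = 1$ and $r_i = 0$ for $i \neq j$. Because $\beta_i = \alpha_i \geq 1$ for every $i$, one checks $r_i < \beta_i$ for all $i$ (using $\alpha_j \geq 2$ for the index $j$), giving $D_d = S$ and $\prod_{i \in D_d}(\alpha_i - \beta_i + 1) = 1$. Substituting into Theorem \ref{deg} yields $\deg(p_j \mathbb{Z}_m) = \prod_{i=1}^{s}(\alpha_i+1) - 3$, so $p_j\mathbb{Z}_m$ is adjacent to every other vertex.

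The only subtle point, and where I would be most careful, is verifying that $D_d \subseteq S'$ combined with $D_d = S$ correctly forces $S = S'$; this step uses the definition of $D_d$ and the hypothesis that $\mathbb{Z}_n$ is a $\mathbb{Z}_m$-module (so $n \mid m$, hence each $\beta_i \leq \alpha_i$). Beyond that the proof is a direct bookkeeping argument on the degree formula, so no essential obstacle is anticipated.
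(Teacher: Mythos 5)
Your proof is correct, but it takes a different route from the paper's. You run everything through the degree formula of Theorem~\ref{deg}: a vertex $d\mathbb{Z}_m$ is universal iff $\prod_{i\notin D_d}(\alpha_i+1)\prod_{i\in D_d}(\alpha_i-\beta_i+1)=1$, and since every factor $\alpha_i+1$ is at least $2$ this forces $D_d=S=S'$ and $\beta_i=\alpha_i$ for all $i$, i.e.\ $n=m$, with $d\neq 1$ then supplying some $\alpha_j\geq 2$; the converse is the same computation run backwards with $d=p_j$. The paper instead argues structurally: a universal vertex precludes isolated vertices, and $n\mathbb{Z}_m$ is isolated whenever $n\neq m$, which gives $n=m$; the squarefree case is then excluded by exhibiting the explicit non-adjacent pair $p_1\cdots p_t\mathbb{Z}_m$ and $p_{t+1}\cdots p_s\mathbb{Z}_m$, and the converse is delegated to the witness $\prod_i p_i^{\alpha_i-1}\mathbb{Z}_m$ constructed in the proof of Theorem~\ref{deltabozorg}. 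Your version is more self-contained (it leans only on Theorem~\ref{deg} rather than on Remark~\ref{zngzm}, Corollary~\ref{noisolatedvertex} and the internals of another proof) and as a bonus it characterizes exactly which vertices are universal, namely those $d$ with $r_i<\alpha_i$ for every $i$; the paper's argument is shorter on the ``only if'' side because the isolated-vertex observation disposes of $n\neq m$ in one line. The only caveat, which your proof shares with the paper's, is the degenerate case $|V(G_n(\mathbb{Z}_m))|=1$ (i.e.\ $m=p_1^2$), where ``adjacent to all other vertices'' holds vacuously for the unique, possibly isolated, vertex; both arguments tacitly assume the universal vertex has positive degree.
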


\begin{proof}
{ If $G_n(\mathbb{Z}_m)$ has a vertex which is adjacent to all other vertices, then $G_n(\mathbb{Z}_m)$ has not any isolated vertex. This implies that $n=m$. By contradiction suppose that $\alpha_1=\cdots =\alpha_s=1$. Let $d\mathbb{Z}_m$ be a vertex of $G_n(\mathbb{Z}_m)$ such that it is adjacent to all other vertices. With no loss of generality, we may assume that $d=p_1\cdots p_t$, where $1\leq t<s$. Let $d'=p_{t+1}\cdots p_{s}$. It is easy to see that $d\mathbb{Z}_m$ and $d'\mathbb{Z}_m$ are non-adjacent, a contradiction. Therefore $\alpha_j\geq 2$, for some $j$, $1\leq j\leq s$. Conversely, if $n=m$ and $\alpha_j\geq 2$, for some $j$, $1\leq j\leq s$, then by Corollary \ref{noisolatedvertex}, $G_n(\mathbb{Z}_m)$ has no isolated vertex. Also, in view of the proof of Theorem \ref{deltabozorg}, we find that $G_n(\mathbb{Z}_m)$ has a vertex which is adjacent to all other vertices.}
\end{proof}

The following corollary is a generalization of \cite[Theorem 2.9]{Chakrabarty}.
\begin{cor}
{ Let $\mathbb{Z}_n$ be a $\mathbb{Z}_m$-module. Then $G_n(\mathbb{Z}_m)$ is a complete graph if and only if $n=m=p_1^{\alpha_1}$ and $\alpha_1\geq 2$.}
\end{cor}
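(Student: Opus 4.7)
The plan is to combine the adjacency criterion of Remark \ref{zngzm} (that $d_1\mathbb{Z}_m$ and $d_2\mathbb{Z}_m$ are adjacent in $G_n(\mathbb{Z}_m)$ iff $n \nmid [d_1,d_2]$) with Theorem \ref{rasimotaselbahame}, and then to add a single contradiction step that rules out the case of more than one prime divisor of $m$. The sufficiency direction reduces to an easy computation with the lattice of ideals of $\mathbb{Z}_{p_1^{\alpha_1}}$.

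For the sufficiency direction, I would assume $n=m=p_1^{\alpha_1}$ with $\alpha_1\geq 2$. Then $I(\mathbb{Z}_m)^*=\{p_1^i\mathbb{Z}_m : 1\leq i\leq \alpha_1-1\}$. For any two distinct such vertices $p_1^i\mathbb{Z}_m,p_1^j\mathbb{Z}_m$ with $i<j$, we have $[p_1^i,p_1^j]=p_1^j$, and since $j\leq \alpha_1-1<\alpha_1$, clearly $n=p_1^{\alpha_1}\nmid p_1^j$. By Remark \ref{zngzm}, these two vertices are adjacent, so $G_n(\mathbb{Z}_m)$ is complete; the borderline case $\alpha_1=2$ collapses to a single vertex and is trivially complete.

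For the necessity direction, assume $G_n(\mathbb{Z}_m)$ is complete. Then in particular there exists a vertex (any vertex) adjacent to all other vertices, so Theorem \ref{rasimotaselbahame} immediately yields $n=m$ and $\alpha_j\geq 2$ for some $j$. The remaining task, which I expect to be the main obstacle, is to exclude the case $s\geq 2$. For this I would take $d_1=p_1^{\alpha_1}$ and $d_2=p_2^{\alpha_2}\cdots p_s^{\alpha_s}$; both are proper divisors of $m$ different from $1$ whenever $s\geq 2$, so they give two distinct vertices of $G_n(\mathbb{Z}_m)$. A direct computation gives $[d_1,d_2]=m$, hence $n=m$ divides $[d_1,d_2]$, and by Remark \ref{zngzm} the vertices $d_1\mathbb{Z}_m$ and $d_2\mathbb{Z}_m$ are non-adjacent—contradicting completeness. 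Therefore $s=1$, and combining with $n=m$ and $\alpha_1\geq 2$ we conclude $n=m=p_1^{\alpha_1}$ with $\alpha_1\geq 2$, exactly as required.
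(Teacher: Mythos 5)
Your proposal is correct and follows essentially the same route as the paper: sufficiency by direct inspection of the ideal lattice of $\mathbb{Z}_{p_1^{\alpha_1}}$, and necessity by invoking Theorem \ref{rasimotaselbahame} to get $n=m$ and then exhibiting, for $s\geq 2$, a complementary pair of divisors whose least common multiple is $m$ (the paper uses $p_s^{\alpha_s}$ and $\prod_{i=1}^{s-1}p_i^{\alpha_i}$, which is the same idea as your $p_1^{\alpha_1}$ and $p_2^{\alpha_2}\cdots p_s^{\alpha_s}$) to contradict completeness.
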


\begin{proof}
{ Suppose that $G_n(\mathbb{Z}_m)$ is a complete graph. By Theorem \ref{rasimotaselbahame}, we find that $n=m$ and $\alpha_j\geq 2$, for some $j$, $1\leq j\leq s$. If $s\geq 2$, then $p_s^{\alpha_s}\mathbb{Z}_m$ and $\prod_{i=1}^{s-1} p_i^{\alpha_i}\mathbb{Z}_m$ are two non-adjacent vertices which is a contradiction. Therefore $n=m=p_1^{\alpha_1}$ and $\alpha_1\geq 2$. The other side is obvious.}
\end{proof}

\begin{thm}\label{gama}
{ Let $\mathbb{Z}_n$ be a $\mathbb{Z}_m$-module. If $G_n(\mathbb{Z}_m)$ is not a null graph, then $$\gamma(G_n(\mathbb{Z}_m))=\left\{
                  \begin{array}{ll}
                    |\mathfrak{A}|+1, & \hbox{if $n\neq p_1 \cdots p_s$ or $n= p_1p_2$, $m=p_1^{\alpha_1}p_2$ and $\alpha_1\geq 2$;} \\
                    |\mathfrak{A}|+2, & \hbox{otherwise.}
                  \end{array}
                \right.
                $$}
\end{thm}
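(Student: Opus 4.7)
The plan rests on the fact that every isolated vertex must belong to every dominating set (having no neighbor to be dominated by), so $\mathfrak{A}\subseteq D$ for any dominating set $D$; and since $G_n(\mathbb{Z}_m)$ is not a null graph, $D$ must also contain at least one non-isolated vertex in order to dominate the other non-isolated vertices. Thus $\gamma(G_n(\mathbb{Z}_m))\ge|\mathfrak{A}|+1$ always, and matching this bound amounts to producing a single non-isolated vertex adjacent to all the remaining non-isolated vertices.

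For the first clause of the formula I will exhibit a dominating set of the form $\mathfrak{A}\cup\{v\}$. When $n\ne p_1\cdots p_s$, the second half of the proof of Theorem~\ref{deltabozorg} already constructs a non-isolated vertex adjacent to every other non-isolated vertex, namely $v=p_j\mathbb{Z}_m$ whenever some $\beta_j=0$, or $v=p_1^{\beta_1-1}\cdots p_{s'}^{\beta_{s'}-1}\mathbb{Z}_m$ when all $\beta_j\ge 1$ with some $\beta_j\ge 2$. In the exceptional case $n=p_1p_2$, $m=p_1^{\alpha_1}p_2$, $\alpha_1\ge 2$, Lemma~\ref{n,d,isole} gives $G_n(\mathbb{Z}_m)\cong K_{\alpha_1}\cup\overline{K_{\alpha_1}}$, so any vertex of the clique component is adjacent to all other non-isolated vertices.

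For the ``otherwise'' clause, i.e.\ $n=p_1\cdots p_s$ with all $\beta_i=1$ and outside the exception (which also forces $s\ge 2$, since $s=1$ gives a null graph), I will prove $\gamma=|\mathfrak{A}|+2$ in two steps. For the upper bound I take $\mathfrak{A}\cup\{p_1\mathbb{Z}_m,p_2\mathbb{Z}_m\}$: for any non-isolated $e\mathbb{Z}_m$, if $p_1\mid e$ then $[p_1,e]=e$ is not divisible by $n$ (since $e\mathbb{Z}_m$ is non-isolated), so $p_1\mathbb{Z}_m$ is adjacent to $e\mathbb{Z}_m$; otherwise $p_1\nmid[p_2,e]$ gives $n\nmid[p_2,e]$, so $p_2\mathbb{Z}_m$ is adjacent to $e\mathbb{Z}_m$.

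The main obstacle is the matching lower bound. Suppose for contradiction that $\mathfrak{A}\cup\{v\}$ dominates, with $v=d\mathbb{Z}_m$ non-isolated; since $\beta_i=1$ for every $i$, $D_d=\{i:p_i\nmid d\}$ is a nonempty proper subset of $S$. If $|D_d|=1$, say $D_d=\{j\}$, then $[d,p_j]$ already covers every prime dividing $n$, so $n\mid[d,p_j]$ and $v$ is not adjacent to $p_j\mathbb{Z}_m$; moreover $p_j\mathbb{Z}_m$ is non-isolated here because by Lemma~\ref{n,d,isole} the only ways $p_j\mathbb{Z}_m$ could be isolated in this regime are the exceptional case or $s=1$, both excluded. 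If $|D_d|\ge 2$, set $e=\prod_{i\in D_d}p_i$; then $e\mathbb{Z}_m$ is a non-isolated vertex distinct from $v$ (since $D_d\subsetneq S$ and $e$ has support disjoint from $d$), while $[d,e]$ covers all primes dividing $n$, so $n\mid[d,e]$ and $v$ is not adjacent to $e\mathbb{Z}_m$. Either way $v$ fails to dominate some non-isolated vertex, giving $\gamma\ge|\mathfrak{A}|+2$. The delicate point is the $|D_d|=1$ sub-case, where one must carefully invoke Lemma~\ref{n,d,isole} to rule out the coincidental isolation of $p_j\mathbb{Z}_m$ under the excluded exceptional geometry.
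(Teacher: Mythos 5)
Your proof is correct and follows essentially the same route as the paper: the lower bound $|\mathfrak{A}|+1$ from isolated vertices, the universal non-isolated vertex from the proof of Theorem \ref{deltabozorg} when $n\neq p_1\cdots p_s$, the clique component in the exceptional case, a two-element dominating set of $G_n(\mathbb{Z}_m)\setminus\mathfrak{A}$ when $n=p_1\cdots p_s$, and the complementary-support vertex $\prod_{i\in D_d}p_i$ to rule out a single dominator. The only (immaterial) differences are your choice of $\{p_1\mathbb{Z}_m,p_2\mathbb{Z}_m\}$ in place of the paper's $\{p_1\mathbb{Z}_m,p_2\cdots p_s\mathbb{Z}_m\}$ and your explicit, slightly more careful split on $|D_d|$.
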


\begin{proof}
{ Suppose that $n\neq p_1\cdots p_s$. In view of the proof of Theorem \ref{deltabozorg}, $G_n(\mathbb{Z}_m)$ has a vertex which is adjacent to every non-isolated vertices. This implies that $\gamma(G_n(\mathbb{Z}_m))=|\mathfrak{A}|+1$. Next, assume that $n= p_1\cdots p_s$. If $s=1$, then
$G_n(\mathbb{Z}_m)$ is a null graph. Now, assume that $s\geq 2$. If $n= p_1p_2$, $m=p_1^{\alpha_1}p_2$ and $\alpha_1\geq 2$, then by Lemma \ref{n,d,isole}, $\gamma(G_n(\mathbb{Z}_m))=|\mathfrak{A}|+1$. Otherwise, let $B=\{p_1\mathbb{Z}_m, p_2\cdots p_s\mathbb{Z}_m\}$. If $d\mathbb{Z}_m$ is a non-isolated vertex, then $n$ does not divide $d$. If $p_1\nmid d$, then $d\mathbb{Z}_m$ is adjacent to $p_2\cdots p_s\mathbb{Z}_m$. Otherwise, $p_j\nmid d$, for some $j$, $2\leq j\leq s$. Therefore $p_j\nmid [d,p_1]$. This yields that $d\mathbb{Z}_m$ is adjacent to $p_1\mathbb{Z}_m$. Therefore $B$ is a dominating set for $G_n(\mathbb{Z}_m)\setminus \mathfrak{A}$. Now, we claim that
$G_n(\mathbb{Z}_m)$ has not a vertex which is adjacent to every non-isolated vertices. By contradiction, suppose that $d\mathbb{Z}_m$ is adjacent to every non-isolated vertices. Since $n\nmid d$, we may assume that $d=p_1\cdots p_t$, where $1\leq t<s$. Let $d'=p_{t+1}\cdots p_s$. Clearly, $d\mathbb{Z}_m$ and $d'\mathbb{Z}_m$ are non-adjacent. Since $n\nmid d$, by Theorem \ref{n,d,isole}, $d'\mathbb{Z}_m$ is not an isolated vertex, a contradiction. Thus $\gamma(G_n(\mathbb{Z}_m))=|\mathfrak{A}|+2$.}
\end{proof}

\section{Chromatic Index of $G_n(\mathbb{Z}_m)$}

In this section, we study the chromatic index of $G_n(\mathbb{Z}_m)$. First, we need the following theorems:

\begin{pro}
\label{vizing}
{\rm \cite[Theorem 17.4]{bondy} (Vizing's Theorem) If $G$ is a simple graph, then either $\chi'(G)=\Delta(G)$ or $\chi'(G)=\Delta(G)+1$.}
\end{pro}

\begin{pro}
\label{maximumedgechromatic}
{\rm \cite[Corollary 5.4]{Beineke} Let $G$ be a simple graph. Suppose that for every vertex $u$ of maximum degree, there exists an edge $u$ --- $v$ such that
$\Delta(G)-deg(v)+2$ is more than the number of vertices with maximum degree in $G$. Then $\chi'(G)=\Delta(G)$.}
\end{pro}

\begin{pro}
\label{2s+1}
{\rm \cite[Theorem D]{Plantholt} If $G$ has order $2k$ and maximum degree $2k-1$, then $\chi'(G)=\Delta(G)$. If $G$ has order $2k + 1$ and maximum degree $2k$, then
$\chi'(G)=\Delta(G)+1$ if and only if the size of $G$ is at least $2k^{2}+1$.}
\end{pro}

\begin{thm}
{Suppose that $\mathbb{Z}_n$ is a $\mathbb{Z}_m$-module and $G_n(\mathbb{Z}_m)$ is not a null graph. If $n=p_1^{\beta_1}$, then $\chi'(G_n(\mathbb{Z}_m))=\Delta(G_n(\mathbb{Z}_m))$ if and only if $\beta_1\prod_{i=2}^{s}(\alpha_i+1)$ is an odd integer. }
\end{thm}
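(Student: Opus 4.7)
The plan is to show that when $n=p_1^{\beta_1}$, the subgraph of $G_n(\mathbb{Z}_m)$ induced by the non-isolated vertices is a complete graph, and then to apply Proposition \ref{2s+1} (equivalently, the classical formula $\chi'(K_t)=t-1$ if $t$ is even and $\chi'(K_t)=t$ if $t$ is odd).

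First I would identify the non-isolated vertices. Since $n=p_1^{\beta_1}$, we have $s'=1$, so we are not in the exceptional case of Lemma \ref{n,d,isole}. Hence a vertex $d\mathbb{Z}_m$ with $d=p_1^{r_1}\cdots p_s^{r_s}$ is non-isolated if and only if $p_1^{\beta_1}\nmid d$, i.e., $r_1<\beta_1$. Using Lemma \ref{cardinalisole}, subtracting the $(\alpha_1-\beta_1+1)\prod_{i=2}^s(\alpha_i+1)-1$ isolated vertices from the $\prod_{i=1}^s(\alpha_i+1)-2$ total vertices yields $N-1$ non-isolated vertices, where
$$N := \beta_1\prod_{i=2}^s(\alpha_i+1).$$

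The key observation is that any two non-isolated vertices $d_1\mathbb{Z}_m, d_2\mathbb{Z}_m$ are adjacent: if the $p_1$-exponents of $d_1,d_2$ are $r_1,r_1'<\beta_1$, then the $p_1$-exponent of $[d_1,d_2]$ is $\max(r_1,r_1')<\beta_1$, so $n=p_1^{\beta_1}\nmid[d_1,d_2]$ and by Remark \ref{zngzm} they are adjacent. Consequently $G_n(\mathbb{Z}_m)$ is the disjoint union of $K_{N-1}$ with $|\mathfrak{A}|$ isolated vertices. In particular $\Delta(G_n(\mathbb{Z}_m))=N-2$ (compatible with Theorem \ref{deltabozorg}) and $\chi'(G_n(\mathbb{Z}_m))=\chi'(K_{N-1})$, since isolated vertices contribute no edges.

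Finally I apply Proposition \ref{2s+1} to $K_{N-1}$. If $N$ is odd then $N-1=2k$ and $\Delta=2k-1$, so $\chi'=\Delta$. If $N$ is even then $N-1=2k+1$ and $\Delta=2k$ with $k\geq 1$ (because $G_n(\mathbb{Z}_m)$ is not a null graph, so $N-1\geq 2$); moreover $|E(K_{N-1})|=\binom{2k+1}{2}=2k^2+k\geq 2k^2+1$, so $\chi'=\Delta+1$. This yields $\chi'(G_n(\mathbb{Z}_m))=\Delta(G_n(\mathbb{Z}_m))$ if and only if $N$ is odd, which is the claim. The only genuinely content-bearing step is the clique observation; the rest is arithmetic packaging plus the cited edge-coloring results.
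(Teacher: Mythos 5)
Your proposal is correct and follows essentially the same route as the paper: identify the non-isolated vertices as those $d\mathbb{Z}_m$ with $p_1$-exponent less than $\beta_1$, observe that they induce a complete graph of order $\beta_1\prod_{i=2}^{s}(\alpha_i+1)-1$, and conclude via the chromatic index of complete graphs (the paper cites Anderson's Theorem 5.11 where you invoke Proposition \ref{2s+1}, but the content is identical). Your version is actually more detailed than the paper's, which states the clique claim without the adjacency verification you supply.
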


\begin{proof}
{ Clearly, the set of all non-isolated vertices of $G_n(\mathbb{Z}_m)$ is equal to $\{d\mathbb{Z}_m\,:\, d=p_1^{r_1}\cdots p_s^{r_s}, d\neq1, \ 0\leq r_1\leq \beta_1-1 \ \hbox{and}\ 0\leq r_i\leq \alpha_i \ \hbox{for}\ i=2,\ldots,s \}$. So $G_n(\mathbb{Z}_m)\setminus \mathfrak{A}$ is a complete graph of order $\beta_1\prod_{i=2}^{s}(\alpha_i+1)-1$. The result, now, follows from \cite[Theorem 5.11]{and}.}
\end{proof}

\begin{thm}\label{alfa1,2}
{ Suppose that $\mathbb{Z}_n$ is a $\mathbb{Z}_m$-module and $G_n(\mathbb{Z}_m)$ is not a null graph. If $n=p_1p_2$ and $m=p_1^{\alpha_1}p_2^{\alpha_2}$, then $\chi'(G_n(\mathbb{Z}_m))=\Delta(G_n(\mathbb{Z}_m))$ if and only if $max\{\alpha_1, \alpha_2\}$ is an even integer. }
\end{thm}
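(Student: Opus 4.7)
The plan is to identify the structure of $G_n(\mathbb{Z}_m)$ explicitly and then reduce the computation of $\chi'$ to the classical edge-chromatic index of complete graphs (the same result \cite[Theorem 5.11]{and} already used in the previous theorem).

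First I would describe the non-isolated subgraph. Using the adjacency criterion from Remark \ref{zngzm} (namely $d_1\mathbb{Z}_m$ --- $d_2\mathbb{Z}_m$ iff $n \nmid [d_1,d_2]$), two vertices corresponding to $d_1=p_1^{r_1}p_2^{r_2}$ and $d_2=p_1^{r_1'}p_2^{r_2'}$ are adjacent iff $p_1 p_2 \nmid p_1^{\max(r_1,r_1')} p_2^{\max(r_2,r_2')}$, equivalently iff $\max(r_1,r_1')=0$ or $\max(r_2,r_2')=0$. Together with Lemma \ref{n,d,isole}, this shows that (in the generic situation where $\alpha_1,\alpha_2\geq 2$) the non-isolated vertices split cleanly into two cliques
\[
\{p_1^{r_1}\mathbb{Z}_m : 1\leq r_1\leq \alpha_1\}\quad\text{and}\quad\{p_2^{r_2}\mathbb{Z}_m : 1\leq r_2\leq \alpha_2\},
\]
with no edges between them. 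Thus $G_n(\mathbb{Z}_m)\setminus\mathfrak{A}\cong K_{\alpha_1}\cup K_{\alpha_2}$, while in the remaining subcases (one of $\alpha_i$ equal to $1$) Lemma \ref{n,d,isole} tells us the non-isolated part is simply $K_{\max(\alpha_1,\alpha_2)}$. In all cases, writing $t=\max\{\alpha_1,\alpha_2\}$, we have $\Delta(G_n(\mathbb{Z}_m))=t-1$.

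Next, since isolated vertices contribute no edges, $\chi'(G_n(\mathbb{Z}_m))=\max\{\chi'(K_{\alpha_1}),\chi'(K_{\alpha_2})\}$. The classical fact $\chi'(K_k)=k-1$ if $k$ is even and $\chi'(K_k)=k$ if $k$ is odd (which the paper quotes as \cite[Theorem 5.11]{and}) now controls everything. A short case analysis, assuming without loss of generality that $\alpha_1=t\geq \alpha_2$, proceeds as follows: if $t$ is even, then $\chi'(K_{\alpha_1})=t-1=\Delta(G_n(\mathbb{Z}_m))$, and $\chi'(K_{\alpha_2})\leq\alpha_2\leq t-1$ except when $\alpha_2=t$, in which case $\chi'(K_{\alpha_2})=t-1$ as well; hence $\chi'(G_n(\mathbb{Z}_m))=\Delta(G_n(\mathbb{Z}_m))$. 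If $t$ is odd, then $\chi'(K_{\alpha_1})=t>t-1=\Delta(G_n(\mathbb{Z}_m))$, so by Vizing's theorem (Theorem \ref{vizing}) we get $\chi'(G_n(\mathbb{Z}_m))=\Delta(G_n(\mathbb{Z}_m))+1$.

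Essentially no serious obstacle arises: the only subtlety is the bookkeeping of which vertices are isolated in the degenerate cases $\alpha_i=1$, and this is precisely what Lemma \ref{n,d,isole} takes care of (including the $K_{\alpha_1}\cup\overline{K_{\alpha_1}}$ case of that lemma). Once the non-isolated subgraph is correctly identified as a disjoint union of complete graphs, the conclusion $\chi'(G_n(\mathbb{Z}_m))=\Delta(G_n(\mathbb{Z}_m))\iff \max\{\alpha_1,\alpha_2\}$ is even follows immediately from the parity dichotomy for $\chi'(K_k)$.
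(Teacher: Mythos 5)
Your proof is correct and follows essentially the same route as the paper: both identify $G_n(\mathbb{Z}_m)\setminus\mathfrak{A}$ as $K_{\alpha_1}\cup K_{\alpha_2}$ (degenerating to $K_{\max\{\alpha_1,\alpha_2\}}$ plus isolated vertices when some $\alpha_i=1$) and then invoke the classical parity formula for $\chi'(K_k)$. The only difference is that you spell out the final parity case analysis, which the paper leaves to the cited result.
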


\begin{proof}
{ If $\alpha_1=\alpha_2=1$, then $G_n(\mathbb{Z}_m)$ is a null graph. If $\alpha_1=1$ and $\alpha_2\geq 2$, then $G_n(\mathbb{Z}_m)\cong K_{\alpha_2}\cup \overline{K_{\alpha_2}}$.
If $\alpha_2,\alpha_2\geq 2$, then assume that $B_i=\{p_i^{r_i}\mathbb{Z}_m\,:\,1\leq r_i\leq \alpha_i \}$, for $i=1,2$. Clearly, $B_1\cup B_2$ is the set of all non-isolated vertices of $G_n(\mathbb{Z}_m)$, every $B_i$ forms a complete graph and $G_n(\mathbb{Z}_m)\setminus \mathfrak{A}$ is the disjoint union of them. Therefore $G_n(\mathbb{Z}_m)\setminus \mathfrak{A}\cong K_{\alpha_1}\cup K_{\alpha_2}$. Now, \cite[Theorem 5.11]{and} completes the proof.}
\end{proof}

\begin{thm}
{ Let $\mathbb{Z}_n$ be a $\mathbb{Z}_m$-module. If $n=p_1\cdots p_s$ and $s\geq 3$, then $\chi'(G_n(\mathbb{Z}_m))=\Delta(G_n(\mathbb{Z}_m)).$}
\end{thm}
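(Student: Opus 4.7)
The plan is to apply Proposition \ref{vizing} and Proposition \ref{maximumedgechromatic}: Vizing's theorem forces $\chi'(G_n(\mathbb{Z}_m))\in\{\Delta,\Delta+1\}$, so it suffices to produce, for every maximum-degree vertex $u$, a neighbor $v$ satisfying
\[
\Delta(G_n(\mathbb{Z}_m))-\deg(v)+2>N,
\]
where $N$ is the number of maximum-degree vertices. By the proof of Theorem \ref{deltabozorg} (with $\beta_i=1$ for every $i$ because $n=p_1\cdots p_s$, and sorting so that $\alpha_1\geq\cdots\geq\alpha_s$), the set of maximum-degree vertices is
\[
\{p_j^{r_j}\mathbb{Z}_m:1\leq j\leq k,\ 1\leq r_j\leq\alpha_j\},
\]
where $k=|\{i:\alpha_i=\alpha_1\}|$, hence $N=k\alpha_1$. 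I would split the argument according to whether some $\alpha_i$ is strictly less than $\alpha_1$.

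In the first case, $k<s$, so $\alpha_s<\alpha_1$ and in particular $\alpha_1\geq 2$. For $u=p_j^{r_j}\mathbb{Z}_m$ with $j\in\{1,\ldots,k\}$, I would take $v=p_jp_s\mathbb{Z}_m$. Because $s\geq 3$, the lcm $[v,u]=p_j^{r_j}p_s$ misses some prime $p_i$, so $n\nmid[v,u]$ and $v$ is adjacent to $u$. A short simplification of Theorem \ref{deg} (exploiting $\alpha_j=\alpha_1$) yields
\[
\Delta-\deg(v)=(\alpha_1+1)\prod_{i=2}^{s-1}\alpha_i\geq(\alpha_1+1)\alpha_1^{k-1},
\]
and an elementary case check on $k$ (with $\alpha_1\geq 2$) gives $(\alpha_1+1)\alpha_1^{k-1}>k\alpha_1$, so Beineke's inequality holds.

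In the second case, $k=s$, all the $\alpha_i$ equal a common value $\alpha$. For $u=p_j^{r_j}\mathbb{Z}_m$, I would choose any $j'\in\{1,\ldots,s\}\setminus\{j\}$ and set $v_u=(m/p_{j'}^{\alpha})\mathbb{Z}_m=\prod_{i\neq j'}p_i^{\alpha}\mathbb{Z}_m$. The prime $p_{j'}$ is missing from both $v_u$ and $u$, hence from $[v_u,u]$, so $n\nmid[v_u,u]$ and $v_u$ is adjacent to $u$. Theorem \ref{deg} gives
\[
\Delta-\deg(v_u)=\alpha(\alpha+1)\bigl[(\alpha+1)^{s-2}-\alpha^{s-2}\bigr].
\]
For $\alpha=1$ the required inequality collapses to $2^{s-1}>s$, which is immediate for $s\geq 3$. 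For $\alpha\geq 2$, the Bernoulli-type bound $(\alpha+1)^{s-2}-\alpha^{s-2}\geq(s-2)\alpha^{s-3}$ yields $\Delta-\deg(v_u)\geq(s-2)(\alpha+1)\alpha^{s-2}$, which dominates $s\alpha-2$ comfortably.

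The main obstacle is organizational rather than numerical: the neighbor $v$ must simultaneously be adjacent to $u$ (forcing $v$ to omit some prime distinct from the single prime appearing in $u$) and have degree substantially less than $\Delta$. The hypothesis $s\geq 3$ is precisely what makes the two requirements compatible, since it leaves an extra prime that $v$ can drop; this explains why the same strategy fails for the $s=2$ situations treated separately (e.g.\ in Theorem \ref{alfa1,2}, where parity of $\max\{\alpha_1,\alpha_2\}$ intrudes).
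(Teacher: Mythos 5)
Your proof is correct and rests on the same key tool as the paper's, namely Beineke's criterion (Theorem \ref{maximumedgechromatic}) applied after identifying the $k\alpha_1$ maximum-degree vertices $p_j^{r_j}\mathbb{Z}_m$ from the proof of Theorem \ref{deltabozorg}. The difference is in the decomposition and the choice of witnesses. The paper fixes the single witness $v=p_1\cdots p_{s-1}\mathbb{Z}_m$ (supplemented by $p_2\cdots p_{s}\mathbb{Z}_m$ when all $\alpha_i$ coincide, since $p_1\cdots p_{s-1}\mathbb{Z}_m$ fails to be adjacent to $p_s^{r_s}\mathbb{Z}_m$ in that case) and then splits on $s\geq 4$ versus $s=3$, with further subcases $\alpha_3\geq 2$, $\alpha_2=1$, $\alpha_2\geq 2$, and the base case $G(\mathbb{Z}_{p_1p_2p_3})$ verified by inspecting a figure. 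You instead split on whether $k<s$ or $k=s$ and tailor the witness to each $u$ (namely $p_jp_s\mathbb{Z}_m$, resp.\ $\prod_{i\neq j'}p_i^{\alpha}\mathbb{Z}_m$), which collapses everything to the two closed forms $(\alpha_1+1)\prod_{i=2}^{s-1}\alpha_i$ and $\alpha(\alpha+1)\bigl[(\alpha+1)^{s-2}-\alpha^{s-2}\bigr]$ plus two short elementary inequalities; in particular the $n=m=p_1p_2p_3$ case is absorbed into the general bound $2^{s-1}>s$ rather than checked by hand. I verified both degree computations against Theorem \ref{deg} and both adjacency claims (each witness omits a prime not occurring in $u$, which is exactly where $s\geq 3$ enters, as you note); the argument is complete and, if anything, more uniform than the published one.
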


\begin{proof}
{ With no loss of generality assume that $\alpha_1\geq \cdots\geq\alpha_s$. Let $\{i\in S : \alpha_i=\alpha_1 \}=\{1,\ldots,k\}$ for some $k$, $1\leq k\leq s$. By Theorem \ref{deltabozorg}, we know that $\{p_i^{r_i}\mathbb{Z}_m : 1\leq i\leq k, \ 1\leq r_i\leq \alpha_i\}$ is the set of all vertices with maximum degree. Hence $G_n(\mathbb{Z}_m)$ has $k\alpha_1$ vertices with maximum degree. On the other hand, we know that $\Delta(G_n(\mathbb{Z}_m))=\prod_{i=1}^{s}(\alpha_i+1)-2-(\alpha_1+1)\prod_{i=2}^{s}\alpha_i$ and $deg(p_1\cdots p_{s-1}\mathbb{Z}_m)= \prod_{i=1}^{s}(\alpha_i+1)-2-\alpha_s\prod_{i=1}^{s-1}(\alpha_i+1)$. It is easy to check that if $s\geq 4$, then $\Delta(G_n(\mathbb{Z}_m))-deg(p_1\cdots p_{s-1}\mathbb{Z}_m) +2$ is more than $\alpha_1+\cdots +\alpha_s$ and so $\Delta(G_n(\mathbb{Z}_m))-deg(p_1\cdots p_{s-1}\mathbb{Z}_m) +2$ is more than the number of vertices with maximum degree. Note that $deg(p_s^{r_s}\mathbb{Z}_m)=\Delta(G_n(\mathbb{Z}_m))$ ($1\leq r_s\leq \alpha_s$) if and only if $\alpha_1=\cdots=\alpha_s$. Also, if $s\geq 4$, then $\Delta(G_n(\mathbb{Z}_m))-deg(p_2\cdots p_{s}\mathbb{Z}_m) +2$ is more than $s\alpha_1$. Therefore by Theorem \ref{maximumedgechromatic}, we find that $\chi'(G_n(\mathbb{Z}_m))=\Delta(G_n(\mathbb{Z}_m))$. Now, consider $s=3$. If $\alpha_3\geq 2$, then it is easy to see that $\Delta(G_n(\mathbb{Z}_m))-deg(p_1\cdots p_{s-1}\mathbb{Z}_m) +2$ is more than $\alpha_1+\alpha_2+\alpha_3$ and the proof is complete. Therefore assume that $\alpha_3=1$. There are two following cases:\\
\indent{Case 1.} $\alpha_2=1$. If $\alpha_1=1$, then $n=m$ and so $G_n(\mathbb{Z}_m)=G(\mathbb{Z}_m)$. One can easily check that $\chi'(G(\mathbb{Z}_{p_1p_2p_3}))=\Delta(G(\mathbb{Z}_{p_1p_2p_3}))=4$ (see \cite[Fig. 3]{Chakrabarty}). If $\alpha_1\geq 2$, then $G_n(\mathbb{Z}_m)$ has $\alpha_1$ vertices with maximum degree. Also, $\Delta(G_n(\mathbb{Z}_m))-deg(p_1 p_{3}\mathbb{Z}_m) +2$ is more than $\alpha_1$ and the result, follows from Theorem \ref{maximumedgechromatic}.\\
\indent{Case 2.} $\alpha_2\geq 2$. In this case, it is easy to see that $\Delta(G_n(\mathbb{Z}_m))-deg(p_2 p_{3}\mathbb{Z}_m) +2$ is more than $\alpha_1+\alpha_2$. Also, $\Delta(G_n(\mathbb{Z}_m))-deg(p_1 p_{3}\mathbb{Z}_m) +2$ is more than $\alpha_1+\alpha_2$ and the result, follows from Theorem \ref{maximumedgechromatic}.}
\end{proof}

\begin{thm}
{ Let $\mathbb{Z}_n$ be a $\mathbb{Z}_m$-module. If $n\neq p_1\cdots p_s$ and $s\geq 2$, then $\chi'(G_n(\mathbb{Z}_m))=\Delta(G_n(\mathbb{Z}_m)).$}
\end{thm}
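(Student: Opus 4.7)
The plan is to work with $H := G_n(\mathbb{Z}_m)\setminus\mathfrak{A}$, which has the same maximum degree and chromatic index as $G_n(\mathbb{Z}_m)$. Since the preceding theorems already treat $n=p_1^{\beta_1}$ and $n=p_1\cdots p_s$, I may assume $s'\geq 2$ and that either $s'<s$ or some $\beta_j\geq 2$. Under these assumptions, the construction in the proof of Theorem \ref{deltabozorg} exhibits a vertex $v_0\in V(H)$ adjacent to every other non-isolated vertex: if some $\beta_j=0$ then $v_0=p_j\mathbb{Z}_m$ works, and otherwise we take $v_0=p_1^{\beta_1-1}\cdots p_{s'}^{\beta_{s'}-1}\mathbb{Z}_m$ (after relabeling so that $S'=\{1,\ldots,s'\}$). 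Hence $\Delta(H)=N-1$, where $N:=|V(H)|$.

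Next, I invoke Plantholt's theorem (Theorem \ref{2s+1}). If $N$ is even, its first clause immediately yields $\chi'(H)=\Delta(H)$. If $N=2k+1$ is odd, its second clause reduces the claim to showing that $H$ is not overfull, i.e.\ $|E(H)|\leq 2k^2$, or equivalently that $H$ has at least $k=(N-1)/2$ non-edges.

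For this edge count I rely on the combinatorial description that $d_1\mathbb{Z}_m$ and $d_2\mathbb{Z}_m$ are non-adjacent in $H$ iff $n\mid[d_1,d_2]$, which (writing $d_i=\prod_j p_j^{r_j^{(i)}}$) is equivalent to the support sets $J_{d_i}:=\{j\in S'\,:\,r_j^{(i)}\geq\beta_j\}$ satisfying $J_{d_1}\cup J_{d_2}=S'$. Non-isolated vertices correspond to $J_d\subsetneq S'$ and universal ones to $J_d=\emptyset$. Grouping non-isolated vertices by $T:=J_d$, the number of vertices with $J_d=T$ equals $n_T:=\prod_{i\in T}(\alpha_i-\beta_i+1)\prod_{i\in S'\setminus T}\beta_i\prod_{i\notin S'}(\alpha_i+1)$, with one subtracted in the case $T=\emptyset$ to exclude $d=1$. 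The number of non-edges of $H$ is then $\sum n_{T_1}n_{T_2}$ over unordered pairs of distinct non-empty proper subsets $T_1,T_2$ of $S'$ with $T_1\cup T_2=S'$. The main obstacle is the arithmetic verification that this sum is at least $(N-1)/2$ whenever $N$ is odd; I would split into the cases $|S'|=2$ (where the sum collapses to the single product $n_{\{1\}}n_{\{2\}}=(\alpha_1-\beta_1+1)(\alpha_2-\beta_2+1)\beta_1\beta_2\bigl(\prod_{i\notin S'}(\alpha_i+1)\bigr)^2$ and the inequality becomes an elementary estimate using the identity $N=\prod_i(\alpha_i+1)-1-\prod_i(\alpha_i-\beta_i+1)$) and $|S'|\geq 3$ (where many more terms contribute, making the bound comfortable).
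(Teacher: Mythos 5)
Your strategy coincides with the paper's: both arguments observe that $H=G_n(\mathbb{Z}_m)\setminus\mathfrak{A}$ has a vertex adjacent to all other non-isolated vertices (so $\Delta(H)=|V(H)|-1$), invoke Plantholt's theorem (Theorem \ref{2s+1}), and, when $|V(H)|=2k+1$ is odd, reduce everything to exhibiting at least $k$ non-edges of $H$. Your bookkeeping is in fact cleaner than the paper's: encoding non-adjacency by $J_{d_1}\cup J_{d_2}=S'$ and summing $n_{T_1}n_{T_2}$ over the admissible pairs of subsets gives the exact non-edge count, whereas the paper only extracts a lower bound from an explicit pairing $d\mapsto\overline{d}$ plus two ad hoc supplementary families of non-adjacent pairs. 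Your implicit restriction to $s'\geq 2$ is also the correct reading of the hypothesis: for prime-power $n$ the literal statement can fail (e.g.\ $n=p_1^2$, $m=p_1^2p_2$ gives $H\cong K_3$, so $\chi'=\Delta+1$), and that case is governed by the first theorem of the section.

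The one genuine gap is that the decisive inequality is asserted rather than proved, and for $|S'|\geq 3$ it is not as ``comfortable'' as you claim. The obstruction is that the vertices with $J_d=\emptyset$ --- there are $n_\emptyset-1=\prod_{i\in S'}\beta_i\prod_{i\notin S'}(\alpha_i+1)-1$ of them --- are precisely the universal vertices of $H$ and lie on \emph{no} non-edge, so their possibly large contribution to $(N-1)/2$ must be absorbed by the remaining classes. This is true but requires an actual argument, e.g.\ the identity $n_{\{1\}}\,n_{S'\setminus\{1\}}=n_\emptyset\cdot\prod_{i\in S'}(\alpha_i-\beta_i+1)\cdot\prod_{i\notin S'}(\alpha_i+1)\geq n_\emptyset$ together with $n_T\,n_{S'\setminus T}\geq n_T$ for the other complementary pairs and an estimate of the shape $a+b\leq ab+1$; for $|S'|=2$ the required bound does collapse to $(u-1)(v-1)+1>0$ with $u=n_{\{1\}}$, $v=n_{\{2\}}$, once one also notes $n_\emptyset\leq uv$. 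So the plan is sound and completable, but the arithmetic you defer is exactly where the paper (and any proof of this theorem) has to do its work, and as written your proposal has not done it.
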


\begin{proof}
{ By Theorem \ref{deltabozorg}, we know that $G_n(\mathbb{Z}_m)$ has $\prod_{i=1}^{s}(\alpha_i+1)-1-\prod_{i=1}^{s}(\alpha_i-\beta_i+1)$ non-isolated vertices and $\Delta(G_n(\mathbb{Z}_m))=\prod_{i=1}^{s}(\alpha_i+1)-2-\prod_{i=1}^{s}(\alpha_i-\beta_i+1)$. Hence by Theorem \ref{2s+1}, we find that $\chi'(G_n(\mathbb{Z}_m))=\Delta(G_n(\mathbb{Z}_m))$, where $\prod_{i=1}^{s}(\alpha_i+1)-1-\prod_{i=1}^{s}(\alpha_i-\beta_i+1)$ is even. Next, assume that $\prod_{i=1}^{s}(\alpha_i+1)-1-\prod_{i=1}^{s}(\alpha_i-\beta_i+1)$ is odd. Since the size of
a complete graph of order $2k+1$ is $2k^{2}+k$, if we prove that $G_n(\mathbb{Z}_m)$, in this case, losses at least $k=\big(\prod_{i=1}^{s}(\alpha_i+1)-2-\prod_{i=1}^{s}(\alpha_i-\beta_i+1)\big)/2$ edges, then by Theorem \ref{2s+1},
$\chi'(G_n(\mathbb{Z}_m))=\Delta(G_n(\mathbb{Z}_m))$. Let $d=\prod_{i=1}^{s}p_i^{r_i}$ be a divisor of $m$ such that $n\nmid d$ and $\{i\in S' : r_i\geq \beta _i\}\neq \varnothing$. Let $\{i\in S' : r_i\geq \beta _i\}=\{1,\ldots,t\}$. Set $\overline{d}=\prod_{i=t+1}^{s'}p_i^{\beta_i}\prod_{i=s'+1}^{s}p_i^{r_i}$. It is easy to check that $d\mathbb{Z}_m$ and $\overline{d}\mathbb{Z}_m$ are not adjacent. So we conclude that $G_n(\mathbb{Z}_m)$ losses at least $\big(\prod_{i=1}^{s}(\alpha_i+1)-\prod_{i=1}^{s}(\alpha_i-\beta_i+1)-\prod_{i=1}^{s'}\beta_i\prod_{i=s'+1}^{s}(\alpha_i+1)\big)/2$ edges. We continue
the proof in the following two cases:\\
\indent{Case 1.} $\beta_i\geq 2$, for some $i$, $1\leq i\leq s'$. With no loss of generality we may assume that $\beta_{s'}\geq 2$. Suppose that $B=\left\{p_1^{r_1}\prod_{i=2}^{s'}p_i^{\beta_i}\mathbb{Z}_m : 0\leq r_1\leq \beta_1-1\right\}$ and
$C=\left\{p_1^{\beta_1}\prod_{i=2}^{s}p_i^{r_i}\mathbb{Z}_m : 0\leq r_i\leq \beta_i-1, \ \hbox{for} \ 2\leq i\leq s'  \ \hbox{and otherwise} \ 0\leq r_i\leq \alpha_i\right\}.$ Clearly, every element of $B$ is not adjacent to every element of $C$. This implies that $G_n(\mathbb{Z}_m)$ losses at least $\prod_{i=1}^{s'}\beta_i\prod_{i=s'+1}^{s}(\alpha_i+1)- (\prod_{i=2}^{s'}\beta_i+ \beta_1-1)$ new edges. Therefore $G_n(\mathbb{Z}_m)$ losses at least $l=(\prod_{i=1}^{s}(\alpha_i+1)-\prod_{i=1}^{s}(\alpha_i-\beta_i+1)+\prod_{i=1}^{s'}\beta_i\prod_{i=s'+1}^{s}(\alpha_i+1))/2- (\prod_{i=2}^{s'}\beta_i+ \beta_1-1)$ edges. It suffices we prove that $k\leq l$. One can easily see that $k\leq l$ if and only if $2(\beta_1-2)\leq (\beta_1\prod_{i=s'+1}^{s}(\alpha_i+1)-2)\prod_{i=2}^{s'}\beta_i$. Since $\beta_{s'}\geq 2$, so $2(\beta_1-2)\leq (\beta_1\prod_{i=s'+1}^{s}(\alpha_i+1)-2)\prod_{i=2}^{s'}\beta_i$ and hence $k\leq l$.\\
\indent{Case 2.} $\beta_1=\cdots=\beta_{s'}=1$. Clearly, $\prod_{i=2}^{s'}p_i\mathbb{Z}_m$ and  $p_1\prod_{i=s'+1}^{s}p_i^{r_i}\mathbb{Z}_m$ are non-adjacent, where $0\leq r_i\leq \alpha_i,$ for $i=s'+1,\ldots,s$ . This implies that $G_n(\mathbb{Z}_m)$ losses at least $\prod_{i=s'+1}^{s}(\alpha_i+1)- 1$ new edges. Therefore $G_n(\mathbb{Z}_m)$ losses at least $l=(\prod_{i=1}^{s}(\alpha_i+1)-\prod_{i=1}^{s}(\alpha_i-\beta_i+1)+\prod_{i=s'+1}^{s}(\alpha_i+1))/2- 1$ edges. In this case, $k\leq l$ is obvious and the proof is complete.}
\end{proof}

From the above theorems, we can deduce the next result.
\begin{cor}{ Let $\mathbb{Z}_n$ be a $\mathbb{Z}_m$-module. If $G_n(\mathbb{Z}_m)$ is not a null graph, then $\chi'(G_n(\mathbb{Z}_m))=\Delta(G_n(\mathbb{Z}_m))$, unless the following cases:
\par $(i)$ $n=p_1^{\beta_1}$ and $\beta_1\prod_{i=2}^{s}(\alpha_i+1)$ is even.
\par $(ii)$ $n=p_1p_2$, $m=p_1^{\alpha_1}p_2^{\alpha_2}$ and $max\{ \alpha_1,\alpha_2\}$ is odd.}
\end{cor}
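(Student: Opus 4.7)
The plan is to combine the four theorems just proved by a case analysis on the arithmetic shape of $n$. After invoking the assumption that $G_n(\mathbb{Z}_m)$ is not a null graph (so the degenerate pairs listed in Corollary \ref{nullgraph} are excluded), I would partition the remaining pairs $(n,m)$ into three disjoint buckets determined by $n$, and verify the conclusion in each.

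The first bucket is when $n$ is a prime power, $n = p_1^{\beta_1}$, allowing $s=1$ as well as $s\geq 2$ with $s'=1$. Here the first theorem of this section is the sharp one: it yields $\chi'(G_n(\mathbb{Z}_m)) = \Delta(G_n(\mathbb{Z}_m))$ if and only if $\beta_1\prod_{i=2}^{s}(\alpha_i+1)$ is odd, which is exactly the failing condition (i). The second bucket is when $n$ is the squarefree product of every prime divisor of $m$, i.e.\ $n = p_1\cdots p_s$ with $s\geq 2$. If $s=2$, Theorem \ref{alfa1,2} shows $\chi' = \Delta$ iff $\max\{\alpha_1,\alpha_2\}$ is even, which carves out exactly exception (ii); if $s\geq 3$, the corresponding theorem gives $\chi' = \Delta$ unconditionally. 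The third bucket collects the remaining cases, where $n$ has at least two distinct prime factors but $n\neq p_1\cdots p_s$ (so either some $\beta_i\geq 2$ with $s'\geq 2$, or $s'<s$). Here the last theorem of the section applies and delivers $\chi' = \Delta$ with no exception.

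The only bookkeeping point I would be careful about is that the three buckets be treated as mutually exclusive: the prime-power case $n = p_1^{\beta_1}$ with $s\geq 2$ formally also satisfies the hypothesis ``$n\neq p_1\cdots p_s$ and $s\geq 2$'' of the last theorem, so I must take the first theorem as the authoritative source for that range in order to read off the correct parity criterion. Once this convention is fixed, the three buckets become exhaustive, and pasting together exceptions (i) and (ii) with the unconditional equalities from the other subcases yields the corollary. I do not expect any genuine obstacle, as the statement is essentially a reorganisation of the preceding four theorems.
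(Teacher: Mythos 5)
Your proposal is correct and matches the paper, which gives no separate argument and simply reads the corollary off the four preceding theorems exactly as you do. Your remark about the overlap between the prime-power case and the hypothesis ``$n\neq p_1\cdots p_s$, $s\geq 2$'' of the last theorem is a sensible (and necessary) clarification that the paper leaves implicit.
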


\section{ Eulerian Tour in $G_n(\mathbb{Z}_m)$}
An \textit{Eulerian tour} in a graph is a closed trail including all the edges of the
graph. A graph is \textit{Eulerian} if it has an Eulerian tour. By \cite[Theorem 4.1]{bondy}, a simple connected graph is Eulerian if and only if it has
no vertices of odd degree. In this section, we determine all integer numbers $n$ and $m$ for which $G_n(\mathbb{Z}_m)\setminus \mathfrak{A}$ is an Eulerian graph. We start with the following theorem.

\begin{thm}
{ Let $\mathbb{Z}_n$ be a $\mathbb{Z}_m$-module. If $G_n(\mathbb{Z}_m)$ is not a null graph, then $diam(G_n(\mathbb{Z}_m)\setminus \mathfrak{A})\leq 4$, except for the case $n=p_1p_2$, $m=p_1^{\alpha_1}p_2^{\alpha_2}$ and $\alpha_1,\alpha_2\geq2$, in which case $G_n(\mathbb{Z}_m)\setminus \mathfrak{A}\cong K_{\alpha_1}\cup K_{\alpha_2}$.}
\end{thm}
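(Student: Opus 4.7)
The plan is to first record the adjacency criterion in terms of the sets $D_d$ and then proceed by case analysis. If $d_1=\prod_{i=1}^{s}p_i^{r_i}$ and $d_2=\prod_{i=1}^{s}p_i^{t_i}$ are divisors of $m$, then $[d_1,d_2]=\prod p_i^{\max(r_i,t_i)}$, so $n\nmid[d_1,d_2]$ iff some $i\in S'$ satisfies $\max(r_i,t_i)<\beta_i$, i.e., $i\in D_{d_1}\cap D_{d_2}$. Hence $d_1\mathbb{Z}_m$ and $d_2\mathbb{Z}_m$ are adjacent in $G_n(\mathbb{Z}_m)$ exactly when $D_{d_1}\cap D_{d_2}\neq\varnothing$, and using this I will in fact prove the sharper bound $diam(G_n(\mathbb{Z}_m)\setminus\mathfrak{A})\leq 2$, which obviously implies the stated $\leq 4$.

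For the exceptional case $n=p_1p_2$, $m=p_1^{\alpha_1}p_2^{\alpha_2}$ with $\alpha_1,\alpha_2\geq 2$, I compute the structure directly. Here $S=S'=\{1,2\}$ and $\beta_1=\beta_2=1$, so $D_d\neq\varnothing$ forces exactly one of $r_1,r_2$ to be $0$. The vertices with $D\neq\varnothing$ thus split into the two families $\{p_2^{r_2}\mathbb{Z}_m:1\leq r_2\leq\alpha_2\}$ (all with $D=\{1\}$) and $\{p_1^{r_1}\mathbb{Z}_m:1\leq r_1\leq\alpha_1\}$ (all with $D=\{2\}$), while every other vertex has $D=\varnothing$ and hence belongs to $\mathfrak{A}$. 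Each family is a clique by the adjacency criterion, and the two families have disjoint $D$-sets so that no edges link them. This yields $G_n(\mathbb{Z}_m)\setminus\mathfrak{A}\cong K_{\alpha_1}\cup K_{\alpha_2}$.

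For every other configuration, take non-isolated $d_1\mathbb{Z}_m$ and $d_2\mathbb{Z}_m$ with $D_{d_1}\cap D_{d_2}=\varnothing$, and pick $i\in D_{d_1}$, $j\in D_{d_2}$; necessarily $i\neq j$, hence $s'\geq 2$. I will build a divisor $e$ of $m$ with $e\neq 1,m$ and $\{i,j\}\subseteq D_e$, producing the length-$2$ path $d_1\mathbb{Z}_m$ --- $e\mathbb{Z}_m$ --- $d_2\mathbb{Z}_m$ (note that $e\mathbb{Z}_m\notin\mathfrak{A}$ automatically, since it has neighbours). The construction splits into three sub-cases: (i) if $s'\geq 3$, set $e=\prod_{k\in S'\setminus\{i,j\}}p_k^{\beta_k}$, which is nontrivial because $|S'\setminus\{i,j\}|\geq 1$ and forces $D_e=\{i,j\}$; (ii) if $s'=2$ and $s>s'$, set $e=p_k$ for any $k\in S\setminus S'$, yielding $D_e=S'=\{i,j\}$; (iii) if $s=s'=2$ with some $\beta_\ell\geq 2$, set $e=p_\ell$, yielding $D_e=\{1,2\}$. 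In each sub-case $e$ has exponent $0$ at $i$ (where $\alpha_i\geq 1$), so $e\neq m$, and $D_e$ contains both $i$ and $j$ by construction.

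The only configuration not reached by the three sub-cases is $s=s'=2$ with $\beta_1=\beta_2=1$, namely $n=p_1p_2$ and $m=p_1^{\alpha_1}p_2^{\alpha_2}$. If $\alpha_1,\alpha_2\geq 2$ this is the exceptional case treated in the second paragraph; if instead $\min\{\alpha_1,\alpha_2\}\leq 1$, Lemma \ref{n,d,isole} together with Corollary \ref{nullgraph} forces $G_n(\mathbb{Z}_m)\setminus\mathfrak{A}$ to be empty or a single clique, hence of diameter at most $1$. I expect the main obstacle to be this final bookkeeping step, namely verifying that the three constructions truly exhaust every non-exceptional scenario and that the exhibited $e$ always satisfies $e\in I(\mathbb{Z}_m)^{*}$ with $D_e\supseteq\{i,j\}$ in each branch.
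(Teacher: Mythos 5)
Your proof is correct, and it in fact establishes more than the theorem asks for, by a genuinely different route. The paper's own argument is a corollary of its earlier domination results: when $n\neq p_1\cdots p_s$ it invokes the vertex adjacent to all non-isolated vertices produced in the proof of Theorem \ref{deltabozorg} to get $diam(G_n(\mathbb{Z}_m)\setminus \mathfrak{A})\leq 2$, and when $n=p_1\cdots p_s$ with $s\geq 3$ it uses the dominating set $\{p_1\mathbb{Z}_m,\,p_2\cdots p_s\mathbb{Z}_m\}$ from the proof of Theorem \ref{gama} together with the observation that $p_2\mathbb{Z}_m$ joins its two members, which only yields the stated bound of $4$. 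You instead encode adjacency as $D_{d_1}\cap D_{d_2}\neq\varnothing$ and, for any two non-adjacent non-isolated vertices, construct an explicit common neighbour $e\mathbb{Z}_m$; this gives the uniform bound $diam(G_n(\mathbb{Z}_m)\setminus\mathfrak{A})\leq 2$ in every non-exceptional case, and in particular shows the paper's bound of $4$ is not sharp (e.g. $diam(G(\mathbb{Z}_{p_1p_2p_3}))=2$). Your case analysis ($s'\geq 3$; $s'=2<s$; $s=s'=2$ with some $\beta_\ell\geq 2$; the residual case $n=p_1p_2$) is exhaustive, the checks that $e\neq 1,m$ and $\{i,j\}\subseteq D_e$ go through in each branch (one tiny imprecision: in branch (iii) the exponent of $e=p_\ell$ vanishes at the index other than $\ell$, which may be $j$ rather than $i$, but either way $e\neq m$ since $m$ has positive exponent at every prime), and your identification of the exceptional case with $K_{\alpha_1}\cup K_{\alpha_2}$ agrees with what the paper extracts from Theorem \ref{alfa1,2}. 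What your route forgoes is the reuse of already-proved structural facts; what it buys is a sharper, self-contained diameter bound.
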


\begin{proof}
{ Assume that $G_n(\mathbb{Z}_m)$ is not a null graph. In view of the proof of Theorem \ref{gama}, we conclude that if $n\neq p_1 \cdots p_s$, then $G_n(\mathbb{Z}_m)\setminus \mathfrak{A}$ is connected and $diam(G_n(\mathbb{Z}_m)\setminus \mathfrak{A})\leq 2$. Now, suppose that $n=p_1 \cdots p_s$. Since $G_n(\mathbb{Z}_m)$ is not a null graph, so $s\neq 1$. First assume that $s\geq 3$. By the proof of Theorem \ref{gama}, we know that $\{p_1\mathbb{Z}_m, p_2\cdots p_s\mathbb{Z}_m\}$ is a dominating set for $G_n(\mathbb{Z}_m)\setminus \mathfrak{A}$. Since $s\geq 3$, $p_2\mathbb{Z}_m$ is adjacent to both $p_1\mathbb{Z}_m$ and $ p_2\cdots p_s\mathbb{Z}_m$. This implies that $G_n(\mathbb{Z}_m)\setminus \mathfrak{A}$ is connected and $diam(G_n(\mathbb{Z}_m)\setminus \mathfrak{A})\leq 4$. Next, assume that $s=2$. By Lemma \ref{n,d,isole}, if $n= p_1p_2$, $m=p_1^{\alpha_1}p_2$ and $\alpha_1\geq 2$, then $diam(G_n(\mathbb{Z}_m)\setminus \mathfrak{A})=1$. Now, consider $n=p_1p_2$, $m=p_1^{\alpha_1}p_2^{\alpha_2}$ and $\alpha_1,\alpha_2\geq2$. As we saw in the proof of Theorem \ref{alfa1,2}, $G_n(\mathbb{Z}_m)\setminus \mathfrak{A}\cong K_{\alpha_1}\cup K_{\alpha_2}$ and the proof is complete.}
\end{proof}

Now, we are in a position to generalize Theorem 5.1 of \cite{Chakrabarty}.
\begin{thm}
{ Suppose that $\mathbb{Z}_n$ is a $\mathbb{Z}_m$-module and $G_n(\mathbb{Z}_m)$ is not a null graph. Then $G_n(\mathbb{Z}_m)\setminus \mathfrak{A}$ is an Eulerian graph if and only if one of the following holds:
\par $(i)$ $\alpha_i$ and $\beta_i$ are even integers for each $i$, $1\leq i\leq s$.
\par $(ii)$ $\alpha_i$ is an odd integer and $\beta_i$ is an even integer for some $i$, $1\leq i\leq s$.
\par $(iii)$ $n=p_1\cdots p_s$ and $m=p_1^{\alpha_1}\cdots p_s^{\alpha_s}$, where $\alpha_i$' are odd integers and $s\geq 3$.
\par $(iv)$ $n=p_1p_2$ and $m=p_1^{\alpha_1}p_2$, where $\alpha_1>1$ is an odd integer.
}
\end{thm}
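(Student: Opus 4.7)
The plan is to combine Theorem~\ref{deg} with the preceding diameter theorem and the classical criterion of \cite[Theorem~4.1]{bondy}: a simple connected graph is Eulerian if and only if every vertex has even degree. Set $N := \prod_{i=1}^{s}(\alpha_i+1)$ and, for $D\subseteq S$, $P_D := \prod_{i\notin D}(\alpha_i+1)\prod_{i\in D}(\alpha_i-\beta_i+1)$. Then Theorem~\ref{deg} reads $\deg(d\mathbb{Z}_m)=N-2-P_{D_d}$ for every non-isolated $d\mathbb{Z}_m$, so the vertex has even degree precisely when $N\equiv P_{D_d}\pmod 2$. The preceding diameter theorem moreover supplies connectedness of $G_n(\mathbb{Z}_m)\setminus\mathfrak{A}$ in every non-null case except $n=p_1p_2$, $m=p_1^{\alpha_1}p_2^{\alpha_2}$ with $\alpha_1,\alpha_2\geq 2$, in which the graph is $K_{\alpha_1}\cup K_{\alpha_2}$ and hence not Eulerian; accordingly this exceptional case is absent from (i)--(iv).

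For sufficiency I would verify $N\equiv P_{D_d}\pmod 2$ in each listed case. In (i) every factor $\alpha_i+1$ and $\alpha_i-\beta_i+1$ is odd, so both $N$ and $P_{D_d}$ are odd. In (ii) a distinguished $j$ with $\alpha_j$ odd and $\beta_j$ even contributes an even factor $\alpha_j+1$ to $N$, and contributes an even factor $\alpha_j+1$ or $\alpha_j-\beta_j+1$ to $P_{D_d}$ according as $j\notin D_d$ or $j\in D_d$; both sides are even. In (iii) all $\alpha_i+1$ are even, so $N$ is divisible by $2^s$; since $\beta_i=1$ for every $i$, the equality $D_d=S$ would force $d=1\notin I(\mathbb{Z}_m)^*$, so $D_d$ is a nonempty proper subset of $S$ and $P_{D_d}$ picks up at least one even factor $\alpha_i+1$ with $i\notin D_d$. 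Finally, in (iv), Lemma~\ref{n,d,isole} identifies $G_n(\mathbb{Z}_m)\setminus\mathfrak{A}$ with $K_{\alpha_1}$, a connected graph whose vertex degrees are $\alpha_1-1$ (even because $\alpha_1$ is odd), and which has edges because $\alpha_1\geq 3$.

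For necessity, assume $G_n(\mathbb{Z}_m)\setminus\mathfrak{A}$ is Eulerian, which already rules out the disconnected $K_{\alpha_1}\cup K_{\alpha_2}$ case. I split on the parity of $N$. If $N$ is odd then all $\alpha_i$ are even, and if some $j\in S'$ had $\beta_j$ odd I would exhibit a non-isolated $d$ with $j\in D_d$ by taking $r_j=0$ and padding the other coordinates to ensure $d\neq 1$ (straightforward for $s\geq 2$; for $s=1$ it requires $\beta_1\geq 2$, which is forced because $\beta_1=1$ gives the null graph excluded by hypothesis). Such a $d$ makes $P_{D_d}$ even while $N$ is odd, contradicting even degree; hence every $\beta_i$ is even, giving (i). If $N$ is even, let $T=\{i:\alpha_i\text{ odd}\}\neq\varnothing$; case (ii) is precisely the subcase in which some $j\in T$ has $\beta_j$ even. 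Otherwise every $i\in T$ has $\beta_i$ odd, so $T\subseteq S'$, and the subset $D=T$ makes $P_T$ odd (each $\alpha_i-\beta_i+1$ for $i\in T$ and each $\alpha_i+1$ for $i\notin T$ is odd). A count of divisors with $D_d=T$ shows this $D$ is realizable by some $d\in I(\mathbb{Z}_m)^*$ with $n\nmid d$ except when $T=S$ and every $\beta_i=1$, that is, $n=p_1\cdots p_s$ with every $\alpha_i$ odd. For $s\geq 3$ this is (iii); for $s=2$ the analysis via Lemma~\ref{n,d,isole} and Theorem~\ref{alfa1,2} reduces $G_n(\mathbb{Z}_m)\setminus\mathfrak{A}$ either to $K_{\alpha_1}$ with the other $\alpha$ equal to $1$ (case (iv)) or to the disconnected $K_{\alpha_1}\cup K_{\alpha_2}$ already ruled out.

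The hard part will be the realizability step in the necessity direction: for each candidate bad subset $D$ one must confirm that $D=D_d$ (or at least $D\subseteq D_d$) for some $d\in I(\mathbb{Z}_m)^*$ with $n\nmid d$. The corner cases in which no such $d$ exists---essentially $s=1$ with $\beta_1=1$, and $T=S$ with $\beta_i=1$ for all $i$---are exactly what carves out the list (i)--(iv), explains the $s\geq 3$ cutoff in (iii), and forces $\alpha_1>1$ in (iv).
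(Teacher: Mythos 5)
Your proposal is correct and takes essentially the same route as the paper's own proof: reduce everything to the parity comparison $\prod_{i=1}^{s}(\alpha_i+1)\equiv\prod_{i\notin D_d}(\alpha_i+1)\prod_{i\in D_d}(\alpha_i-\beta_i+1)\pmod 2$ via Theorem~\ref{deg}, supply connectedness from the diameter theorem, and split the necessity direction on the parity of $\prod_{i=1}^{s}(\alpha_i+1)$. If anything you are more explicit than the paper about the two points it glosses over --- the realizability of the subsets $D_{d'}$ used to produce odd-degree vertices, and the fact that for $s=2$ with both $\alpha_i\geq 2$ the graph $K_{\alpha_1}\cup K_{\alpha_2}$ is disconnected and hence excluded even when all degrees are even.
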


\begin{proof}
{ By Theorem \ref{deg}, $G_n(\mathbb{Z}_m)\setminus \mathfrak{A}$ is an Eulerian graph if and only if for each non-isolated vertex $d\mathbb{Z}_m$ of $G_n(\mathbb{Z}_m)$ both $\prod_{i=1}^{s}(\alpha_i+1)$ and $\prod_{i\notin D_d}(\alpha_i+1)\prod_{i\in D_d}(\alpha_i-\beta_i+1)$ are even or odd integers. So the ``if" part of theorem is obvious. For the converse, suppose that $G_n(\mathbb{Z}_m)\setminus \mathfrak{A}$ is an Eulerian graph. First assume that both $\prod_{i=1}^{s}(\alpha_i+1)$ and $\prod_{i\notin D_d}(\alpha_i+1)\prod_{i\in D_d}(\alpha_i-\beta_i+1)$ are odd integers for a vertex $d\mathbb{Z}_m$ of $G_n(\mathbb{Z}_m)\setminus \mathfrak{A}$. Thus all $\alpha_i$' are even integers. If $\beta_i$ is an odd integer for some $i$, $1\leq i\leq s$, then there exists a vertex $d'\mathbb{Z}_m$ such that $D_{d'}=\{i\}$. (Note that $G_n(\mathbb{Z}_m)$ is not a null graph.) So $\prod_{i\notin D_{d'}}(\alpha_i+1)\prod_{i\in D_{d'}}(\alpha_i-\beta_i+1)$ is an even integer which implies that $deg(d'\mathbb{Z}_m)$ is an odd integer, a contradiction. Therefore $\beta_i$ is an even integer for each $i$, $1\leq i\leq s$.
Next, assume that both $\prod_{i=1}^{s}(\alpha_i+1)$ and $\prod_{i\notin D_d}(\alpha_i+1)\prod_{i\in D_d}(\alpha_i-\beta_i+1)$ are odd integers for a vertex $d\mathbb{Z}_m$ of $G_n(\mathbb{Z}_m)\setminus \mathfrak{A}$. Hence $\alpha_i$ is an odd integer for some $i$, $1\leq i\leq s$. With no loss of generality suppose that $\{i\in S : \alpha_i \ \hbox{is an odd integer}\}=\{1,\ldots,t\}$, where $1\leq t\leq s$. Suppose that $\beta_1,\ldots,\beta_t$ are odd integers. If $n=p_1\cdots p_s$ and $m=p_1^{\alpha_1}\cdots p_s^{\alpha_s}$, where $\alpha_i$' are odd integers, then we are done. (Note that $G_n(\mathbb{Z}_m)\setminus \mathfrak{A}$ is a connected graph.) Otherwise there exists a vertex $d'\mathbb{Z}_m$ such that $D_{d'}=\{1,\ldots,t\}$. So $\prod_{i\notin D_{d'}}(\alpha_i+1)\prod_{i\in D_{d'}}(\alpha_i-\beta_i+1)$ is an odd integer which implies that $deg(d'\mathbb{Z}_m)$ is an odd integer, a contradiction. Thus $\beta_i$ is an even integer for some $i$, $1\leq i\leq t$. The proof is complete.}
\end{proof}

\end{document}